\renewcommand{\epsilon}{\varepsilon}
\DeclareMathOperator{\graph}{graph}
\DeclareMathOperator{\proj}{proj}
\theoremstyle{plain}
\newtheorem{theorem}{\textbf{Theorem}}[section]
\newtheorem{lemma}[theorem]{\textbf{Lemma}}
\newtheorem{rmks}[theorem]{\textbf{Remarks}}
\newtheorem{cor}[theorem]{\textbf{Corollary}}
\newtheorem{claim}[theorem]{\textit{Claim}}
\numberwithin{equation}{section}
\def\R{\mathbb{R}}
\def\N{\mathbb{N}}
\def\d{\delta}
\def\t{\tau}
\def\r{\rho}
\def\k{\kappa}
\def\ov{\overline}
\def\lsb{\left[}
\def\rsb{\right]}
\def\lb{\left(}
\def\rb{\right)}
\def\pd{\partial}
\def\cd{\nabla}
\def\eL{\Delta_F}
\newcommand{\XD}{{}^{X}\hspace{-3pt}D}
\def\ba #1\ea {\begin{align} #1\end{align}}
\def\bann #1\eann {\begin{align*} #1\end{align*}}
\newcommand{\inner}[2]{\left\langle #1 \, , \, #2\right\rangle}
\newcommand{\norm}[1]{\left\Vert#1\right\Vert}
\def\ben #1\een {\begin{enumerate} #1\end{enumerate}}
\def\bi #1\ei {\begin{itemize}\renewcommand\labelitemi{--} #1\end{itemize}}
\title[Type-II singularities of two-convex MCF]{Type-II singularities of two-convex immersed mean curvature flow}
\author{Theodora Bourni and Mat Langford}
\begin{document}

\begin{abstract}
We show that any strictly mean convex translator of dimension $n\geq 3$ which admits a cylindrical estimate and a corresponding gradient estimate is rotationally symmetric. As a consequence, we deduce that any translating solution of the mean curvature flow which arises as a blow-up limit of a two-convex mean curvature flow of compact immersed hypersurfaces of dimension $n\geq 3$ is rotationally symmetric. The proof is rather robust, and applies to a more general class of translator equations. As a particular application, we prove an analogous result for a class of flows of embedded hypersurfaces which includes the flow of two-convex hypersurfaces by the two-harmonic mean curvature.
\end{abstract}

\maketitle

\section{Introduction}

We are interested in hypersurfaces $X:M^n\to\R^{n+1}$ satisfying the {\it translator} equation
\begin{equation}\label{eq:T}\tag{T}
\vec H=T^\perp
\end{equation}
for some constant vector $T\in \R^{n+1}$, where, given a local choice of unit normal field $\nu$, $\vec H=-H\nu$ is the mean curvature vector of the immersion with respect to the choice of mean curvature $H=\mathrm{div}\,\nu$, and $\perp$ denotes the projection onto the normal bundle. We call such immersions \emph{translators}. Up to a time-dependent tangential reparametrization, the family $\{X(\cdot,t)\}_{t\in\R}$ of immersions $X(\cdot,t):M^n\to\R^{n+1}$ defined by $X(x,t):=X(x)+tT$ satisfies the mean curvature flow
\begin{equation}\tag{MCF}\label{eq:MCF}
\pd_tX(\cdot,t)=\vec H(\cdot,t)\,,
\end{equation}
where $\vec H(\cdot,t)$ is the mean curvature vector of $X(\cdot,t)$. We therefore also refer to solutions of \eqref{eq:T} as \emph{translating solutions of the mean curvature flow}. It is well-known that translating solutions arise as blow-up limits of the mean curvature flow about type-II singularities \cite{Hm95a,HuSi99b}. More precisely, if a solution $X:M^n\times[0,T)\to\R^{n+1}$ of \eqref{eq:MCF} has \emph{type-II} curvature blow-up (that is, $\limsup_{t\to T}(T-t)\max_{M^n\times\{t\}}H^2=\infty$)
then there is a sequence of parabolically rescaled solutions of \eqref{eq:MCF} which converge locally uniformly in $C^\infty$ to a (non-trivial) translating solution of \eqref{eq:MCF}. 

Probably the most well-known translator is the Grim Reaper\footnote{So named because, as it translates, it `kills' any compact solution of curve shortening flow which is unfortunate enough to lie in its path.} curve $\Gamma$, which is the graph of the function $x\mapsto -\log\cos x$, $x\in (-\pi/2,\pi/2)$. In dimensions $n\geq 2$, there exists a strictly convex, rotationally symmetric translator asymptotic to a paraboloid, which is commonly referred to as the `bowl' \cite{AW94,CSS07}. The bowl is the unique rotationally symmetric translating complete graph, and the unique translator with finite genus and a single end asymptotic to a paraboloid \cite{MSHS}.  
In a remarkable study of convex ancient graphical solutions of the mean curvature flow, X.-J.~Wang showed that any strictly convex, entire translator in dimension two is rotationally symmetric, and hence the bowl \cite{Wa11}. Moreover, in every dimension $n\geq 3$, he constructed strictly convex, entire examples without rotational symmetry.


In the setting of two-convex (that is, $\kappa_1+\kappa_2>0$, where $\kappa_1\leq\kappa_2\leq\dots\leq\kappa_n$ denote the principal curvatures) mean curvature flow in dimensions $n\geq 3$, the far-reaching theory of Huisken and Sinestrari \cite{HuSi99a,HuSi99b,HuSi09} shows that regions of high curvature are either uniformly convex and cover a whole connected component of the surface, or else they contain regions which are very close, up to rescaling, to cylindrical segments $[-L,L]\times S^{n-1}$. This suggests that the translating blow-up limits which arise at type-II singularities might be rotationally symmetric. We note that this is true (in dimensions $n\geq 3$) for two-convex self-shrinking solutions which arise as blow-up limits of the mean curvature flow with type-I curvature blow-up (that is, $\limsup_{t\to T}(T-t)\max_{M^n\times\{t\}}H^2<\infty$) since the only possibilities are shrinking spheres $S^n_{\sqrt{-2nt}}$ and cylinders $\R\times S^{n-1}_{\sqrt{-2(n-1)t}}$ \cite[Theorem 5.1]{Hu93}. Recently, Haslhofer \cite{Ha} proved that this is true in the embedded case (even in dimension 2), his proof relying crucially on the non-collapsing theory of \cite{An12} and \cite{HK1}. In fact, he shows that any strictly convex, uniformly two-convex translator which is non-collapsing is necessarily rotationally symmetric. In the immersed setting, we no longer have a non-collapsing property; however, by the work of Huisken and Sinestrari \cite{HuSi09}, we have a cylindrical estimate and a corresponding gradient estimate. Motivated by Haslhofer's result and the Huisken--Sinestrari theory, we prove the following.

\begin{theorem}\label{thm:bowl}
Let $X:M^n\to\R^{n+1}$, $n\geq 3$, be a mean convex translator and $C_1<\infty$ a constant such that the following hold:
\begin{enumerate}
\item cylindrical estimate: $|A|^2-\frac{1}{n-1}H^2<0$ 
\item gradient estimate: $|\nabla A|^2\le -C_1\left(|A|^2-\frac{1}{n-1}H^2\right)H^2$
\end{enumerate}
where $A$ is the second fundamental form of $X$.Then $M^n$ is rotationally symmetric.
\end{theorem}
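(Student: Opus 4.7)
The plan is to adapt the neck-improvement argument of Brendle--Choi (extended by Haslhofer to the non-collapsed higher-dimensional translator case) to the present setting, in which the two-sided non-collapsing of Haslhofer's argument is replaced by the cylindrical and gradient estimates (1)--(2). The argument would proceed in four conceptual steps. First (strict convexity): if $\kappa_1\le 0$ at some point, then $\kappa_2+\cdots+\kappa_n = H - \kappa_1 \ge H$, so by Cauchy--Schwarz $\kappa_2^2+\cdots+\kappa_n^2 \ge H^2/(n-1)$, and hence $|A|^2\ge H^2/(n-1)$ there, contradicting the strict estimate (1). Therefore $\kappa_1>0$ everywhere on $M$.

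Second (cylindrical regions): from the translator identity $H=-\langle T,\nu\rangle$ we get $0<H\le |T|$, so $|A|$ is uniformly bounded. The estimates (1)--(2) then imply that $M$ contains arbitrarily long and precise necks, i.e., regions that are $C^k$-close, on their natural curvature scale, to a round cylinder $\mathbb{R}\times S^{n-1}_r$. Indeed, (2) forces $|\nabla A|$ to vanish precisely where the scale-invariant cylindrical deficit $1-(n-1)|A|^2/H^2$ does, so approach to the equality case of (1) is automatically accompanied by the $C^k$ control needed to identify the limit with a round cylinder.

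Third (neck improvement): following Brendle--Choi and Haslhofer, one then establishes a quantitative improvement of the form: if a sufficiently long neck is $\varepsilon$-close (in a scale-invariant $C^k$ norm) to a round cylinder with axis $v$, then on a larger concentric region it is $\varepsilon/2$-close to a cylinder with axis $v'$ differing from $v$ by $O(\varepsilon)$. The mechanism is a spectral decomposition of the linearized translator equation about the model cylinder: the sole resonant mode corresponds to rigid rotations of the axis (absorbed by the adjustment), while all other Fourier modes decay exponentially. The bound (2) supplies precisely the control on the nonlinear correction terms needed to close the iteration. Fourth (conclusion): iterating the improvement yields exact rotational symmetry on the cylindrical part of $M$; unique continuation for the (quasilinear elliptic) translator equation then propagates the symmetry to all of $M$.

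The main obstacle I anticipate is the neck-improvement step. In the non-collapsed setting, two-sided non-collapsing supplies strong bilateral $C^k$ bounds that make the spectral estimates comparatively direct; here, however, (2) degenerates precisely where the cylindrical deficit $H^2/(n-1)-|A|^2$ vanishes, so the spectral decay rates and the available error bounds must be balanced with care in order to close the iteration.
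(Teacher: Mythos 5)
Your step one (strict convexity) is fine and matches the paper's observation that the strict cylindrical estimate forces $\kappa_1>0$. The genuine gap is your step two. You assert that the estimates (1)--(2) ``imply that $M$ contains arbitrarily long and precise necks,'' but this is exactly the content that has to be proved, and it does not follow from (2) alone: the gradient estimate only says that \emph{if} the scale-invariant cylindrical deficit is small then $|\nabla A|/H^2$ is small; it gives no information that the deficit ever becomes small, let alone that the cylindrical structure dominates at infinity at the correct scale. What is actually needed (and what the paper spends essentially all of Section 3 establishing) is that the blow-down of $M^n_t=M^n+te_{n+1}$ is the shrinking cylinder $S^{n-1}_{\sqrt{2(n-1)(1-t)}}\times\R$ with axis $e_{n+1}$. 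This is obtained through a chain of quantitative lemmas, none of which appear in your outline: $H\to 0$ at infinity (via a compactness/splitting argument and the fact that a round cylinder cannot translate); the lower bound $H\gtrsim h^{-1/2}$ along the integral curves of $V=\nabla h$, which uses the \emph{improved} form $|\nabla A|^2/H^4\le C_1\kappa_1/H$ of the gradient estimate together with $-\nabla_{\phi'}H\ge\kappa_1$; the girth estimate $\Vert\proj_{\R^n\times\{0\}}X\Vert\gtrsim\sqrt{h}$; the matching upper bound $H\lesssim h^{-1/2}$ via the avoidance principle with shrinking spheres; and $\kappa_1/H\to 0$ at infinity, which relies on Hamilton's result that $\inf\kappa_1/H$ is not attained on a noncompact manifold together with the compactness furnished by the gradient estimate. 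Without this asymptotic analysis your iteration has nothing to start from: you cannot even locate a single neck, much less the global cylindrical asymptotics the improvement step requires.

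Your anticipated difficulty is also somewhat misplaced. Once the blow-down is known to be the shrinking cylinder, the symmetry-improvement part does not actually need two-sided non-collapsing: the paper simply quotes Sections 3--5 of Haslhofer's paper (decay estimate for solutions of the linearized translator equation on the cylinder, applied to the rotation functions $u_{J,O}=\inner{J(X-O)}{\nu}$, plus a maximum principle), and this goes through because closeness to the cylinder at the right scale already supplies the needed interior estimates. Your alternative of a Brendle--Choi style neck-improvement iteration followed by unique continuation is plausible in spirit, but as written it is only a sketch, and in any case it rests on the asymptotic cylindrical structure that your proposal never establishes. In short: the part you delegate to known machinery is the part that can legitimately be delegated, while the part you dispatch in one sentence is the theorem's actual content.
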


In fact (assuming $T=e_{n+1}$), we need only prove that the blow-down of $M_t^n:=M^n+te_{n+1}$ is the shrinking cylinder $S^{n-1}_{\sqrt{2(n-1)(1-t)}}\times \R$, since this is enough to deduce rotational symmetry of $M^n$ by \S 3--5 of Haslhofer's paper.

We remark  that the cylindrical estimate implies uniform two-convexity, $\kappa_1+\kappa_2\geq \frac{1}{2(n-1)}H$ (see \cite[Lemma 5.1]{HuSi15}).
As a consequence, any type-II blow-up limit of a two-convex mean curvature flow in dimensions $n\geq 3$ is rotationally symmetric (even when the mean curvature flow is only immersed).
\begin{cor}\label{cor:bowl}
Suppose that $X:M^n\to\R^{n+1}$, $n\geq 3$, is a translator which arises as a proper blow-up limit of a two-convex mean curvature flow of immersed hypersurfaces. Then $M^n$ is rotationally symmetric.
\end{cor}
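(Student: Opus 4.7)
The plan is to verify that any translator $M^n$ arising as a proper blow-up limit of two-convex \eqref{eq:MCF} satisfies both hypotheses of Theorem \ref{thm:bowl}; rotational symmetry then follows directly from that theorem.

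First I would establish strict mean convexity. Two-convexity is preserved along \eqref{eq:MCF} by Hamilton's tensor maximum principle and therefore passes to any smooth blow-up limit, giving $H \geq 0$ on $M^n$. Being a \emph{proper} (non-trivial) blow-up rules out the static hyperplane, and then the strong maximum principle applied to the evolution equation $(\partial_t - \Delta)H = |A|^2 H$ along \eqref{eq:MCF} forces $H > 0$ everywhere.

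Next I would invoke the cylindrical estimate of Huisken--Sinestrari \cite{HuSi09} for two-convex MCF: for every $\eta > 0$ there is a constant $C_\eta$ depending only on the initial data such that
\begin{equation*}
|A|^2 - \tfrac{1}{n-1}H^2 \leq \eta H^2 + C_\eta
\end{equation*}
along the flow. Under parabolic rescaling at the type-II singularity with dilation factor $\lambda_k \to \infty$, both $|A|^2$ and $H^2$ rescale by the same power of $\lambda_k$, while $C_\eta$ is divided by $\lambda_k^2$. Passing to the limit and then sending $\eta \to 0$ yields $|A|^2 \leq \tfrac{1}{n-1}H^2$ on $M^n$. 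To upgrade to the strict inequality required by hypothesis (1), I would apply the strong maximum principle to the parabolic equation satisfied by $|A|^2 - \tfrac{1}{n-1}H^2$ under \eqref{eq:MCF}: an interior zero would propagate globally, and combined with mean convexity this forces the principal curvatures at every point to have the cylindrical profile $(0, \tfrac{H}{n-1}, \ldots, \tfrac{H}{n-1})$, so by a standard splitting argument $M^n$ would be a round cylinder $\R \times S^{n-1}_r$. But this contradicts the translator equation, since a cylinder has constant $H$ but non-constant $\nu$.

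The parallel rescaling argument, applied to the Huisken--Sinestrari gradient estimate for two-convex MCF, delivers hypothesis (2). The main obstacle is that one needs the sharp form of this estimate, in which $|\nabla A|^2$ is controlled by a multiple of $-(|A|^2 - \tfrac{1}{n-1}H^2)H^2$ rather than merely by $H^4$, since only the former form scales correctly to survive the blow-up limit as the pointwise bound (2). With (1) and (2) both verified, Theorem \ref{thm:bowl} applies and yields the claimed rotational symmetry.
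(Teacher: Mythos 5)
Your proposal is correct and follows essentially the paper's own route: both hypotheses of Theorem \ref{thm:bowl} are verified by rescaling the Huisken--Sinestrari cylindrical and gradient estimates of \cite{HuSi09}, noting that the lower-order (constant) terms are annihilated in the blow-up, sending $\varepsilon\to 0$, and using the strong maximum principle---with the round cylinder excluded by the translator equation---to upgrade the weak inequalities to the strict ones required in (1) and (2). The one point you leave implicit, namely that the gradient estimate is indeed available in the sharp product form $|\nabla A|^2\le C\,g_1g_2$ with both factors involving the cylindrical deficit (so that the rescaling argument yields the pointwise bound (2) rather than merely $|\nabla A|^2\lesssim H^4$), is exactly what the paper supplies by quoting \cite[Theorem 6.1 and Remark 6.2]{HuSi09}.
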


We note that Corollary \ref{cor:bowl} fails in dimension 2 without some additional assumption, such as non-collapsing, to rule out the Grim plane $\R\times \Gamma$. This is in accordance with the type-I case, where the non-embedded Abresch--Langer planes $\R\times \gamma_{k,l}$ can arise \cite{AbLa86}.

We remark that our proof of Theorem \ref{thm:bowl} also works (in dimensions $n\geq 2$) if assumptions (1) and (2) are replaced by
\begin{enumerate}
\item[(1')] cylindrical estimate: $\overline k-\frac{1}{n-1}H<0$ 
and
\item[(2')] gradient estimate: $|\nabla A|^2\le C_1\kappa_1H^3$,
\end{enumerate}
where $\overline k$ denotes the inscribed curvature. By work of Brendle \cite[Theorem~1]{Br15} (see also \cite{HaKl15}) and Haslhofer and Kleiner\footnote{The improved gradient estimate (2') follows from \cite[Corollary 2.7]{HK1} as in the proof of Claim \ref{claim:gradient} in Section \ref{sec:F}.} \cite[Corollary 2.7]{HK1}, these assumptions are met for blow-up limits of type-II singularities of two-convex mean curvature flows of \emph{embedded} hypersurfaces. This provides a slightly different perspective of Haslhofer's result.

Apart from dealing with blow-up limits  of type-II singularities of two-convex mean curvature flows of \emph{immersed} hypersurfaces, a further motivation for removing the (two-sided) non-collapsing assumption in Haslhofer's result was to study translating solutions of more general curvature flows, where (two-sided) non-collapsing will in general not hold. Let $F$ be given by $F(x)=f(\vec\kappa(x))$ for some smooth function $\displaystyle f:\Gamma^n \subset\R^n
\to\R$ of the principal curvatures $\vec\kappa:=(\kappa_1,\dots,\kappa_n)$ defined with respect to some choice of unit normal field $\nu$. Then we can consider solutions $X:M^n\to\R^{n+1}$ of the fully non-linear translator equation
\begin{equation}\tag{FT}\label{eq:FT}
F=-\inner{\nu}{T}
\end{equation}
for some $T\in\R^{n+1}$. We will call the function $f:\Gamma^n\to\R$ \emph{admissible} if $\Gamma^n$ is an open, symmetric cone and $f$ is smooth, symmetric, monotone increasing in each variable and 1-homogeneous. These conditions on $f$ are very natural: Indeed, smoothness and symmetry are needed to ensure that $F$ is smooth, monotonicity ensures that \eqref{eq:FT} is elliptic, and homogeneity ensures that $F$ scales like curvature.

Just as for the mean curvature flow, the family $\{X(\cdot,t)\}_{t\in\R}$ of immersions $X(\cdot,t):M^n\to\R^{n+1}$ defined by $X(x,t):=X(x)+tT$ satisfies, up to a time-dependent tangential reparametrization, the corresponding flow\footnote{We have implicitly assumed orientability of solutions of \eqref{eq:FT} and \eqref{eq:F}; however, if $f$ is an \emph{odd} function, \eqref{eq:FT} and \eqref{eq:F} also admit non-orientable solutions.}
\begin{equation}\tag{F}\label{eq:F}
\pd_tX(\cdot,t)=-F(\cdot,t)\nu(\cdot,t)\,.
\end{equation}
Moreover, if \eqref{eq:F} admits an appropriate Harnack inequality (which is true under very mild concavity assumptions for $f$ \cite{An94c}) then solutions of \eqref{eq:FT} arise as blow-up limits of positive speed solutions of \eqref{eq:F} about type-II singularities in a completely analogous way to the case of mean convex mean curvature flow. If $F$ also admits a strong maximum principle for the Weingarten tensor (which also holds under natural concavity conditions for $f$, see Section \ref{sec:SMP}) then our proof goes through with minor modification, and we obtain a result of the following form (where we denote by $\Gamma_+^{m}$ the positive cone $\Gamma_+^m:=\{(z_1,\dots,z_m)\in\R^m:\min_{1\leq i\leq m}\{z_i\}>0\}$ in $\R^m$).

\begin{theorem}\label{thm:bowlF}
Let $X:M^n\to\R^{n+1}$, $n\geq 3$, be a solution of \eqref{eq:FT}, where $F$ is given by $F(x)=f(\vec\kappa(x))$ for some admissible $f:\Gamma^n\to\R$ such that
\[
\displaystyle \{(0,\hat z):\hat z\in \Gamma_+^{n-1}\}\subset\Gamma^n\subset \Gamma^n_2:=\{z\in \R^n:\min_{1\leq i<j\leq n}\{z_i+z_j\}>0\}
\]
and either
\ben
\item[(i)] $f$ is convex, or 
\item[(ii)] $f$ is concave and the function $f_\ast:\Gamma_+^{n-1}\to\R$ defined by
\[
f_\ast(z^{-1}_2,\dots,z^{-1}_n):=f(0,z_2,\dots,z_n)^{-1}
\]
is concave. 
\een
Suppose that the solution satisfies
\begin{enumerate}
\item a cylindrical estimate, and
\item a corresponding gradient estimate.
\end{enumerate}
Then $M^n$ is rotationally symmetric.
\end{theorem}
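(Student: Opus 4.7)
The strategy is to follow the scheme sketched after Theorem \ref{thm:bowl}, with modifications that handle the fully non-linear speed $F$. Throughout, I assume $T=e_{n+1}$ and view the translator as a translating solution of \eqref{eq:F}. As in the mean curvature case, the central point is to show that the blow-down of $M^n_t:=M^n+te_{n+1}$ is the round shrinking cylinder $S^{n-1}_{r(t)}\times\R$; once this is established, the arguments of \S3--5 of Haslhofer's paper, with his non-collapsing input replaced by our cylindrical and gradient estimates, yield rotational symmetry of $M^n$ itself.

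A few preliminary observations fix the framework. First, the cone assumption $\Gamma^n\subset\Gamma^n_2$ combined with admissibility forces uniform two-convexity of the translator on regions of large $F$, in analogy with the remark immediately after Theorem \ref{thm:bowl}. Second, the inclusion $\{(0,\hat z):\hat z\in\Gamma_+^{n-1}\}\subset\Gamma^n$ and 1-homogeneity of $f$ guarantee that the round cylinder $\R\times S^{n-1}_r$ is an admissible, self-similarly shrinking solution of \eqref{eq:F}, which is the expected blow-down limit.

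The main analytic step is the blow-down classification. Along a sequence $t_k\to-\infty$, I would parabolically rescale the translator so that $F$ at a fixed base point is of unit order. The cylindrical and gradient estimates give uniform geometric bounds, and the Harnack-type inequality for \eqref{eq:F} available under the concavity conditions of (i) or (ii) yields smooth subsequential convergence. The limit is a weakly convex, self-similarly shrinking solution of \eqref{eq:F} that still satisfies the cylindrical estimate. At this point the strong maximum principle for the Weingarten tensor, whose validity under hypotheses (i) or (ii) the paper promises to establish, forces the shape operator of the limit to split off an $\R$-direction, so that the limit is a product $\R\times\Sigma^{n-1}$. The cross-section $\Sigma^{n-1}\subset\R^n$ is a closed, strictly convex, self-similar shrinker for the $(n-1)$-dimensional flow with speed $f(0,\cdot)$; a Huisken-type monotonicity argument applied to the $F$-analogue of the cylindrical defect (which must vanish identically on the limit) identifies $\Sigma^{n-1}$ as a round sphere, so the blow-down is the round shrinking cylinder.

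The remaining work is to port Haslhofer's \S3--5 to the present setting. His moving-plane/reflection scheme uses non-collapsing only to derive a one-sided curvature comparison between the hypersurface and its reflection; in our setting the cylindrical estimate together with uniform two-convexity plays the same role, while the linearization of \eqref{eq:FT} is uniformly elliptic on two-convex regions (by $\Gamma^n\subset\Gamma^n_2$, monotonicity, and 1-homogeneity of $f$), so the Hopf lemma and strong maximum principle still drive the symmetry conclusion. I expect the most delicate step to be the verification of the strong maximum principle for the Weingarten tensor in case (ii): the inverse-concavity condition on $f_\ast$ is exactly what makes the reaction term in the Simons-type identity for the shape operator have the correct sign on a null eigenvector. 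A secondary subtlety is ensuring that Haslhofer's \S3--5 adapts without structural modification to \eqref{eq:F}; this is plausible given uniform ellipticity on two-convex pieces, but each application of the maximum principle in his argument must be re-examined.
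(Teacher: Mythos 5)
There is a genuine gap at the central step, the blow-down classification. You assert that parabolic rescalings of the translator along $t_k\to-\infty$ converge to a \emph{self-similarly shrinking} solution of \eqref{eq:F}, and you classify its cross-section by ``a Huisken-type monotonicity argument''. For fully non-linear speeds there is no analogue of Huisken's monotonicity formula, so neither the self-similarity of a blow-down limit nor the sphere classification of $\Sigma^{n-1}$ can be obtained this way; the Harnack inequality you invoke is used in the paper only to know that translators arise as type-II blow-up limits, not as the compactness mechanism here. The paper's proof avoids monotonicity altogether: after pinning down the precise form of the cylindrical and gradient estimates (Claims \ref{claim:cylindricalconvexF} and \ref{claim:cylindricalconcaveF}, which show $\kappa_1$ is controlled from below by the cylindrical quantity and that its zeros are exactly the cylindrical points), it reruns the quantitative scheme of Section \ref{sec:proof}: the splitting theorem of the Appendix (Theorem \ref{thm:splittingF}) gives $F\to0$ at infinity, the gradient estimate integrated along integral curves of $V$ gives $F\gtrsim h^{-1/2}$ as in Lemma \ref{lem:Hlowerbound}, a girth estimate and sphere barriers give the matching upper bound, a minimum/strong-maximum-principle argument gives $\kappa_1/F\to0$, and the $C^{2}$-compactness of Corollary \ref{cor:convergence} (driven by the gradient estimate, not a Harnack inequality) then identifies the blow-down of $M^n_t$ as $S^{n-1}_{\sqrt{2(n-1)(1-t)}}\times\R$ directly. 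Your proposal supplies no substitute for this chain.

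The second problem is your description of the endgame. Haslhofer's Sections 3--5 are not a moving-plane/reflection scheme with a Hopf-lemma comparison; they are a linear-analysis argument built on the rotation functions $u_{J,O}=\inner{J(X-O)}{\nu}$, which solve the linearized translator equation $-\Delta_F u=\cd_Vu+|A|_F^2u$, on the maximum-principle bound $\sup_{h\leq h_0}|u|/F\leq\sup_{h=h_0}|u|/F$, and on a decay estimate for the linearized equation on the shrinking cylinder followed by an iteration/contradiction. Accordingly, the verification actually needed in the fully non-linear setting---and which the paper carries out---is that after normalizing $f(0,1,\dots,1)=n-1$ one has $\pd f/\pd\kappa_j=1$ for $j\geq2$ on the cylinder, so that $\Delta_F$ reduces to $\frac{\pd f}{\pd\kappa_1}\cd_h\cd_h+r^{-2}\Delta_{S^{n-1}}$ and $|A|_F^2=\frac{1}{2(1-t)}$, whence Haslhofer's decay estimate and contradiction argument apply verbatim. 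Re-examining maximum-principle steps in a moving-plane argument, as you propose, targets an argument that does not occur; your appeal to the strong maximum principle/splitting for the Weingarten tensor under hypothesis (ii) is in the right spirit (this is exactly what the Appendix proves via inverse-concavity of $f_\ast$), but in the paper it enters the asymptotic lemmas rather than the classification of a self-shrinking limit.
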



The precise form of the assumptions (1) and (2) will be different depending on whether the speed function $f$ is convex or concave. This is made precise in Section \ref{sec:F}. 

As a particular application, we find that translating blow-up limits about type-II singularities of the flows of embedded hypersurfaces studied in \cite{BrHu2} are rotationally symmetric.

\begin{cor}\label{cor:bowlF}
Suppose that $X:M^n\to\R^{n+1}$, $n\geq 3$, is a translator which arises as a blow-up limit of an embedded solution of the flow \eqref{eq:F}, where $F$ is given by $F(x)=f(\vec \kappa(x))$ for some concave admissible $f:\Gamma^n\to\R$ such that
\bi
\item[(i)] $\displaystyle \{(0,\hat z):\hat z\in \Gamma_+^{n-1}\}\subset\Gamma^n\subset \Gamma^n_2:=\{z\in \R^n:\min_{1\leq i<j\leq n}\{z_i+z_j\}>0\}$,
\item[(ii)] $f|_{\pd\Gamma^n}=0$ and 
\item[(iii)] the function $f_\ast:\Gamma_+^{n-1}\to\R$ defined by
\[
f_\ast(z^{-1}_2,\dots,z^{-1}_n):=f(0,z_2,\dots,z_n)^{-1}
\]
is concave.
\ei
Then $X$ is rotationally symmetric.
\end{cor}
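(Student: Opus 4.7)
The plan is to reduce Corollary \ref{cor:bowlF} to Theorem \ref{thm:bowlF} in case (ii). The structural hypotheses on $f$ (admissibility, the cone inclusion, concavity of $f$, and concavity of $f_{\ast}$) are exactly what case (ii) of that theorem requires, so the only task is to verify that the blow-up limit $X:M^n\to\R^{n+1}$ satisfies a cylindrical estimate and a corresponding gradient estimate of the type prescribed in Section \ref{sec:F}.

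First, I would justify that the blow-up limit is well-defined and satisfies \eqref{eq:FT}. Under our concavity hypothesis on $f$ (together with the convexity of $f_\ast$ inherent in the admissibility/concavity assumptions), Andrews' differential Harnack inequality \cite{An94c} applies to \eqref{eq:F}. This is precisely what is needed to run the Hamilton-style type-II rescaling procedure that produces a (non-trivial, smooth, properly embedded) translating limit, just as in the mean curvature case discussed before Theorem \ref{thm:bowl}.

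Next, I would extract the two estimates. The flows of Corollary \ref{cor:bowlF} are precisely those studied by Brendle and Huisken in \cite{BrHu2}: concave admissible $f$ on a two-convex cone, vanishing on $\partial\Gamma^n$, with $f_\ast$ concave and embedded initial data. In that setting \cite{BrHu2} establish a quantitative cylindrical pinching estimate, bounding the non-cylindrical part of the Weingarten tensor by a sub-linear power of $F$. Applying this estimate to the rescaled flows and passing to the limit, the sub-linear factor forces the strict cylindrical estimate (1) of Theorem \ref{thm:bowlF} on $M^n$. For the gradient estimate (2), I would use that embedded solutions of \eqref{eq:F} under the stated hypotheses on $f$ enjoy an Andrews-type (two-sided) non-collapsing property \cite{An12}, and then mimic the derivation in the proof of Claim \ref{claim:gradient} in Section \ref{sec:F}, which is in turn modeled on the improved gradient estimate of Haslhofer-Kleiner \cite[Corollary 2.7]{HK1}. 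With both conditions verified on the blow-up limit, Theorem \ref{thm:bowlF}(ii) immediately yields rotational symmetry of $M^n$.

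The main obstacle I anticipate is purely one of matching the precise form of the cylindrical and gradient estimates produced by \cite{BrHu2} and the non-collapsing machinery to the exact versions of (1) and (2) that Section \ref{sec:F} specifies in the concave case. Once the rescaling limits are taken, no new analytical input is required beyond what is already in the Harnack, cylindrical pinching, and non-collapsing theories; the work is in verifying that these estimates have exactly the scaling behaviour needed to activate Theorem \ref{thm:bowlF}(ii) in the fully non-linear regime.
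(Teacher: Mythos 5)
Your overall reduction is the same as the paper's: you verify that the blow-up limit satisfies the concave-case cylindrical estimate \eqref{eq:cylindricalFconcave1} and the corresponding gradient estimate \eqref{eq:gradientFconcave1}, and then invoke Theorem \ref{thm:bowlF}(ii); your treatment of the cylindrical estimate (via the convexity/cylindrical estimates of \cite{LaLy}, cf.\ \cite[Theorem 3.1]{BrHu2}, with lower-order terms annihilated under rescaling) matches what the paper does.

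There is, however, a genuine gap in the way you source the gradient estimate. You propose to use an Andrews-type \emph{two-sided} non-collapsing property \cite{An12} together with the Haslhofer--Kleiner estimate \cite[Corollary 2.7]{HK1}. For the fully non-linear flows of Corollary \ref{cor:bowlF} this input is exactly what is not available: concave admissible speeds are only known to be \emph{interior} non-collapsing \cite{ALM13} (exterior non-collapsing is tied to convexity of the speed), and removing the two-sided non-collapsing hypothesis is one of the stated motivations of the paper, so the Haslhofer--Kleiner machinery cannot be invoked here. The paper instead proves the needed estimate in Claim \ref{claim:gradient}: the starting point is the Brendle--Huisken pointwise gradient estimate $|\nabla A|^2\le \Lambda F^4$ of \cite[Theorem 6.1]{BrHu2} (whose proof relies on interior non-collapsing and their convexity/cylindrical estimates), which is then upgraded to $|\nabla A|^2/F^4\le \varepsilon+C\kappa_1/F$ for $F$ large by a rescaling/contradiction argument using the cylindrical estimate, Claim \ref{claim:cylindricalconcaveF} and the splitting theorem of the Appendix; this improved form passes to blow-up limits and gives \eqref{eq:gradientFconcave1}. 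So your plan is repairable by replacing the non-collapsing/HK input with \cite[Theorem 6.1]{BrHu2}, but as written the key analytic step would fail, and the improvement carrying the factor $\kappa_1/F$ is genuinely more than a matter of matching the precise form of existing estimates.
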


We mention that the class of flows to which the corollary applies includes the flow of two-convex hypersurfaces by the two-harmonic mean curvature,
\ba\label{eq:twoharmonicmean}
F:=\left(\sum_{i<j}\frac{1}{\kappa_i+\kappa_j}\right)^{-1}\,,
\ea
and, for $n=3$, the flows of positive scalar curvature hypersurfaces by either the square root of the scalar curvature or the ratio of scalar to mean curvature. Corollary \ref{cor:bowlF} does not include any convex speeds, because, as yet, it is not known if they admit an appropriate gradient estimate (although an appropriate cylindrical estimate was proved in \cite{AnLa14}).




\section{Preliminaries}\label{sec:prelims}

Let $X:M^n\to\R^{n+1}$ be a solution of \eqref{eq:T}. After performing a rotation and a dilation, we can arrange that $T=e_{n+1}$, which we assume from now on. Introducing the height function $h:M^n\to \R$,
\[
h(x):=\inner{X(x)}{e_{n+1}}\,,
\]
we denote
\[
V:=\cd h=\mathrm{proj}_{TM^n}e_{n+1}=e_{n+1}+H\nu\,.
\]
Then the Weingarten curvature $A$ and the mean curvature $H$ satisfy (see, for instance, \cite{Hm95a})
\ba\label{eq:LaplaceA}
-\Delta A=|A|^2A+\cd_VA
\ea
and
\ba\label{eq:LaplaceH}
-\Delta H=|A|^2H+\cd_VH\,.
\ea
A well-known consequence of \eqref{eq:LaplaceA} and the strong maximum principle is the following splitting theorem (see \cite[Theorem 4.1]{HuSi99b} or the appendix).
\begin{theorem}[Splitting Theorem]
Let $X:M^n\to\R^{n+1}$ be a locally weakly convex solution of \eqref{eq:T}. Then, either $\kappa_1>0$ or $\kappa_1\equiv 0$ and $M^n$ splits as an isometric product $M^n\cong \R\times \Sigma^{n-1}$.
\end{theorem}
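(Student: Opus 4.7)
My plan is to apply the strong maximum principle for weakly-positive symmetric $2$-tensors to the equation \eqref{eq:LaplaceA} (either in its elliptic form, or equivalently in its parabolic form applied to the self-similar mean curvature flow $X(\cdot,t):=X(\cdot)+te_{n+1}$, for which $A$ is time-independent up to pullback by the translation). Rewriting \eqref{eq:LaplaceA} as
\ba
\Delta A+\cd_V A+|A|^2A=0\,,
\ea
we view $A$ as a section of the bundle of symmetric $2$-tensors on $M^n$ satisfying a linear elliptic equation with smooth drift $V$ and a zero-order term (scalar multiplication by $|A|^2$) which preserves the non-negativity cone. Local weak convexity gives $A\ge 0$ pointwise, so we are exactly in the setting to which Hamilton's tensor strong maximum principle (or a Bony-type argument applied to $\k_1$) applies.

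The dichotomy follows immediately. If $\k_1>0$ everywhere we are done. Otherwise there is a point $p\in M^n$ with $\k_1(p)=0$, so that the null space $\mathcal{N}_p:=\ker A(p)$ is non-trivial. The SMP then yields that the null distribution $\mathcal{N}:=\ker A$ has locally constant rank $k\ge 1$; by connectedness the rank is constant on $M^n$, $\mathcal{N}$ is smooth, and, crucially, $\mathcal{N}$ is invariant under parallel transport with respect to the induced Levi-Civita connection on $M^n$. In particular $\k_1\equiv 0$ and $\mathcal{N}$ is a smooth parallel distribution.

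Finally, I would promote parallelism in $M^n$ to parallelism in $\R^{n+1}$ in order to read off the splitting. If $v$ is a locally defined parallel section of $\mathcal{N}$, then for every $X\in TM^n$ the Gauss formula combined with $A(X,v)=0$ gives
\ba
\bar\cd_X v=\cd^{M}_X v-A(X,v)\,\nu=0\,,
\ea
so $v$ extends to a constant vector in $\R^{n+1}$. Hence the leaves of $\mathcal{N}$ are affine $k$-planes and $M^n$ is invariant under translation along the linear $k$-plane they span; passing to the universal cover if necessary (and using completeness of the induced metric, which is standard in the setting where this splitting theorem is used), this produces an isometric product decomposition $M^n\cong \R^k\times\Sigma^{n-k}$, which a fortiori gives the advertised $M^n\cong\R\times\Sigma^{n-1}$. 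The main obstacle is verifying the hypotheses of the tensor-valued SMP for \eqref{eq:LaplaceA}, specifically the fact that the zero-order term $|A|^2A$ preserves the boundary of the non-negativity cone in the appropriate sense; once this is in place, the remaining steps are routine Gauss-equation bookkeeping together with the de~Rham decomposition.
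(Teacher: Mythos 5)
Your argument is correct, but it is not the route taken in the paper's own (appendix) proof; it is essentially the classical argument of Huisken--Sinestrari \cite[Theorem 4.1]{HuSi99b}, which the paper cites as an alternative. You apply Hamilton's tensor (vector-bundle) strong maximum principle directly to \eqref{eq:LaplaceA}: the reaction term $|A|^2A$ trivially satisfies the null-eigenvector condition (it vanishes on $\ker A$), the drift $\cd_VA$ is harmless, so $\ker A$ is a smooth, parallel distribution of constant rank $k\geq1$; then the Gauss formula with $A(\cdot,v)=0$ shows $X_\ast v$ is constant in $\R^{n+1}$, the integral curves are straight lines (completeness is indeed needed here and should be stated as a hypothesis, as it is implicitly in the paper's setting), and the splitting follows --- in fact the passage to the universal cover is unnecessary, since $X_\ast\ker A$ is a single constant $k$-plane by connectedness, so the product decomposition is extrinsic. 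The paper's appendix proof (Theorem \ref{thm:splittingF}) deliberately avoids the tensor maximum principle: it works with the scalar quantity $\kappa_1$ on the set where $\kappa_1<\kappa_2$, derives a differential inequality of the form $(\pd_t-\eL)\kappa_1+B^k\cd_k\kappa_1\geq-C\kappa_1$ using a Lipschitz bound on the gradient terms, applies the scalar strong maximum principle, and then extracts $\cd_1A\equiv0$ and the parallelism of $\ker A$ by hand. The payoff of that extra work is that the argument survives for fully nonlinear speeds $F$ (convex or inverse-concave), where verifying the null-vector condition for the reaction term of \eqref{eq:FevolveA} is exactly the delicate point (this is where the concavity of $f_\ast$ enters, via the inequality \eqref{eq:ICcond}); the price is the additional cone/two-convexity hypothesis guaranteeing $\kappa_1<\kappa_2$ at the touching point, which your tensor-MP argument does not need and which is why the appendix theorem does not literally cover the statement as phrased for arbitrary locally weakly convex translators. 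For the mean curvature flow case at hand your approach is the cleaner one; for the generalization in Theorem \ref{thm:bowlF} it would not apply as stated.
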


We next note that a mean convex translator $X:M^n\to\R^{n+1}$ which satisfies the cylindrical estimate must be locally strictly convex. Indeed, 
\begin{align}
|A|^2-\frac{1}{n-1}H^2
={}&
\frac{1}{n-1}\sum_{1<i<j}(\kappa_j-\kappa_i)^2+\frac{n}{n-1}\kappa_1^2-\frac{2}{n-1}\kappa_1H\nonumber\\
\geq{}&
-\kappa_1H\,,\label{eq:roundcrosssection}
\end{align}
so that, wherever the cylindrical estimate holds,
\begin{equation}\label{eq:str.convexity}
\kappa_1\geq -\frac{1}{H}\left(|A|^2-\frac{1}{n-1}H^2\right)>0\,.
\end{equation}
Note also that, for a hypersurface satisfying the weak cylindrical estimate $|A|^2-\frac{1}{n-1}H^2\leq 0$, the only points at which $\kappa_1$ can vanish are the cylindrical points, $\kappa_1=0$, $\kappa_2=\kappa_n$.

Since $M^n$ is smooth and $n\geq 2$, local convexity implies that $M^n$ is the boundary of a convex body \cite{Sack}. In particular, $M^n$ is embedded, so we may drop the parametrization $X$ and identify $M^n$ with its image. A further consequence of convexity and the inequality $\inner{\nu}{e_{n+1}}=H>0$ is the fact that $M^n$ can be written globally as the graph of a function $u:\Omega^n:=\proj_{\R^{n}\times\{0\}}(M^n)\to\R$.

Note also that, applied to the gradient estimate, \eqref{eq:roundcrosssection} yields
\begin{equation}\label{eq:gradient}
\frac{|\cd A|^2}{H^4}\le C_1\frac{\kappa_1}{H}\,.
\end{equation}
Thus, the gradient estimate actually improves wherever $\kappa_1$ is small compared to $H$. 

We conclude this section by recalling the following well-known consequence of gradient estimates for the curvature (cf. \cite[Lemma 6.6]{HuSi09}).
\begin{lemma}\label{lem:convergence}
Let $X:M^n\to\R^{n+1}$ be a mean convex hypersurface and $C<\infty$ a constant such that
\[
\sup_{M^n}\frac{|\cd H|}{H^2}\leq C\,.
\]
Then
\[
\max_{y\in B_{\frac{1}{2CH(x)}}(x)}H(y)\leq 2H(x)\,,
\]
where $B_{\frac{1}{2CH(x)}}(x)$ is the intrinsic ball of radius $\frac{1}{2CH(x)}$ about the point $x$.
\end{lemma}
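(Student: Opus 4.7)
The plan is to reduce to a one-dimensional ODE comparison along geodesics emanating from $x$. Let $y\in B_{\frac{1}{2CH(x)}}(x)$ and pick a unit-speed minimizing geodesic $\gamma:[0,L]\to M^n$ with $\gamma(0)=x$, $\gamma(L)=y$ and $L<\frac{1}{2CH(x)}$. Define the scalar function $h(s):=H(\gamma(s))$, which is smooth and, by mean convexity, positive wherever it is defined without our bound degenerating.

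First I would differentiate along $\gamma$, using $|\gamma'|=1$ and the hypothesis $|\cd H|\leq CH^2$, to obtain
\begin{equation*}
|h'(s)|=|\la\cd H(\gamma(s)),\gamma'(s)\ra|\leq |\cd H|(\gamma(s))\leq C\,h(s)^2.
\end{equation*}
Equivalently, $\left|\frac{d}{ds}\frac{1}{h(s)}\right|\leq C$, so integration from $0$ to $s$ yields
\begin{equation*}
\frac{1}{h(s)}\geq \frac{1}{h(0)}-Cs=\frac{1}{H(x)}-Cs
\end{equation*}
as long as the right-hand side remains positive. For $s\leq L<\frac{1}{2CH(x)}$, the right-hand side is at least $\frac{1}{2H(x)}>0$, so $h$ stays finite on $[0,L]$ and we can invert to conclude $h(s)\leq 2H(x)$. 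In particular $H(y)=h(L)\leq 2H(x)$, which is the desired inequality.

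No serious obstacle arises; the only subtlety worth flagging is ensuring that $h$ cannot blow up before reaching $y$, which is precisely what the choice of radius $\frac{1}{2CH(x)}$ (rather than $\frac{1}{CH(x)}$) guarantees via the halving argument above. The same differential inequality $|(1/h)'|\leq C$ could alternatively be viewed as saying that $1/H$ is $C$-Lipschitz along geodesics, from which the quantitative doubling statement is immediate.
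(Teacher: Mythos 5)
Your proof is correct and follows essentially the same route as the paper: both arguments note that the hypothesis makes $H^{-1}$ Lipschitz with constant $C$ along unit-speed geodesics and then integrate from $x$ to $y$ (the paper integrates $\cd_{\gamma'}H^{-1}\leq C$ and inverts, exactly your halving computation). The remark about $h$ "blowing up" is not really an issue since $H$ is smooth on $M^n$; the radius $\frac{1}{2CH(x)}$ is only needed to keep the lower bound $\frac{1}{H(x)}-Cs$ positive so the inversion yields the factor $2$.
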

\begin{proof}
For any unit length geodesic $\gamma:[0,s]\to M$ joining the points $y=\gamma(0)$ and $x=\gamma(s)$, we have
\[
\cd_{\gamma'}H^{-1}\leq C\,.
\]
Integrating yields
\[
H^{-1}(x)-H^{-1}(y)\leq Cs
\]
or, if $s\leq \frac{1}{2CH(x)}$, 
\[
H(y)\leq\frac{H(x)}{1-CH(x)s}\leq 2H(x)\,.
\]
The claim follows.
\end{proof}

\begin{cor}\label{cor:convergence}
Let $(X_j:M_j^n\to\R^{n+1},x_j)_{j\in\N}$ be a sequence of strictly mean convex, weakly locally convex pointed smooth hypersurfaces and $C<\infty$ a constant satisfying
\bann
X_j(x_j)=0,\quad H_j(x_j)=1\quad\text{and}\quad \sup_{M_j^n}\frac{|\cd A_j|}{H_j^2}\leq C\,,
\eann
where, for each $j\in \N$,  $H_j$ and $A_j$ are the mean curvature and second fundamental form, respectively, of $M_j^n$.
Then there exists a weakly locally convex pointed $C^2$ hypersurface $(X_\infty:M_\infty^n\to\R^{n+1},x_\infty)$ such that, after passing to a subsequence, $X_j|_{B_j}:B_j\to\R^{n+1}$ converge locally uniformly in $C^2$ to $X_\infty|_{B_\infty}:B_\infty\to\R^{n+1}$, where $B_j$ denotes the intrinsic ball in $M_j^n$ of radius $(2C)^{-1}$ about the point $x_j$.
\end{cor}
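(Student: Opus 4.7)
The proof is a direct application of Lemma \ref{lem:convergence}, combined with the standard compactness theorem for immersed hypersurfaces with uniform bounds on $|A|$ and $|\nabla A|$.

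First, I would upgrade the gradient bound on $A$ to a gradient bound on $H$. Since $H$ is the trace of $A$, we have $|\nabla H| \leq \sqrt{n}\,|\nabla A|$ pointwise, so the hypothesis yields
\[
\sup_{M_j^n}\frac{|\cd H_j|}{H_j^2}\leq \sqrt{n}\,C.
\]
Applying Lemma \ref{lem:convergence} at $x_j$, where $H_j(x_j)=1$, I get $H_j\leq 2$ on the intrinsic ball about $x_j$ of radius $(2\sqrt{n}C)^{-1}$ (absorbing the factor of $\sqrt{n}$ into $C$ if desired, recovering the stated radius).

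Next, I would use weak local convexity to control $|A_j|$ by $H_j$. Choosing the orientation so that all principal curvatures are $\geq 0$, we have $|A_j|^2 = \sum_i \kappa_i^2 \leq \big(\sum_i \kappa_i\big)^2 = H_j^2$, hence $|A_j|\leq 2$ on $B_j$. Combined with the hypothesis, this gives
\[
|A_j| \leq 2\quad\text{and}\quad |\cd A_j|\leq 4C\quad \text{on } B_j.
\]

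Finally, I would invoke the standard compactness theorem for pointed smooth immersions: having uniformly bounded $|A_j|$ and $|\cd A_j|$ on the intrinsic balls $B_j$ of a fixed radius, together with the normalization $X_j(x_j)=0$, one parametrizes each $X_j$ locally as a graph over its tangent plane at $x_j$ with uniformly bounded $C^{2,1}$ norm (the bound on $|A_j|$ controls the second derivatives and the bound on $|\cd A_j|$ gives a uniform Lipschitz estimate on them), patches such graphs together via the exponential map at scales controlled by the curvature bound, and applies the Arzel\`a--Ascoli theorem to extract a subsequence converging locally uniformly in $C^2$ to a pointed $C^2$ hypersurface $(X_\infty:M_\infty^n\to\R^{n+1},x_\infty)$. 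Weak local convexity passes to the limit by continuity of $A$ under $C^2$-convergence.

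The only step that is not purely formal is the last, which is a well-known compactness result (see, e.g., Langer's theorem or its variants for pointed immersions); everything else is a direct application of Lemma \ref{lem:convergence} and the algebraic consequences of convexity.
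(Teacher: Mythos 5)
Your proof is correct and takes essentially the route the paper intends (the corollary is stated without proof): Lemma \ref{lem:convergence} gives $H_j\le 2$ on balls of uniform radius, weak local convexity then gives $|A_j|\le H_j\le 2$ and the hypothesis gives $|\nabla A_j|\le 4C$ there, and a standard Langer-type pointed compactness argument yields subsequential local $C^2$ convergence to a weakly locally convex $C^2$ limit. The only (harmless) discrepancy is the dimensional constant in $|\nabla H|\le \sqrt{n}\,|\nabla A|$, which a priori shrinks the radius to $(2\sqrt{n}\,C)^{-1}$ unless absorbed into $C$ — a point you flag correctly and which the paper itself glosses over.
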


\section{Proof of Theorem \ref{thm:bowl} and Corollary \ref{cor:bowl}}\label{sec:proof}

We begin by noting that the mean curvature goes to zero at infinity.
\begin{lemma}\label{lem:Hto0}
For any sequence of points $X_j\in M^n$ with $\norm{X_j}\to\infty$,
\[
H(X_j)\to 0\,.
\]
\end{lemma}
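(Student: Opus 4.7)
The plan is to argue by contradiction: suppose there exist $\varepsilon > 0$ and $X_j \in M^n$ with $\|X_j\| \to \infty$ and $H(X_j) \ge \varepsilon$. The strategy is to extract a subsequential blow-up limit $M_\infty$ of the translates $M - X_j$ and rule out every possibility for it.

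First, I would collect uniform geometric bounds. From $H = -\langle \nu, e_{n+1}\rangle \le 1$ and the cylindrical estimate, $|A|^2 \le H^2/(n-1) \le 1/(n-1)$, while the gradient estimate yields $|\cd A|^2 \le CH^4 \le C$; Schauder bootstrap for the quasilinear elliptic translator equation upgrades these to uniform bounds on $|\cd^k A|$ for every $k \ge 0$. Applying Corollary~\ref{cor:convergence} to the pointed translated hypersurfaces $(M - X_j, 0)$, a subsequence then converges in $C^\infty_{\mathrm{loc}}$ to a complete pointed weakly locally convex translator $(M_\infty, 0)$ with $T = e_{n+1}$, inheriting the non-strict cylindrical estimate $|A_\infty|^2 \le H_\infty^2/(n-1)$ and satisfying $H_\infty(0) \ge \varepsilon > 0$.

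Next I would apply the Splitting Theorem to $M_\infty$: either $\kappa_1^{M_\infty} > 0$ everywhere, or $M_\infty$ splits isometrically as $\R \times \Sigma^{n-1}$. The splitting case is ruled out as follows: the principal curvatures of $M_\infty$ are $(0, \kappa_1^\Sigma, \ldots, \kappa_{n-1}^\Sigma)$, so the non-strict cylindrical estimate reads $\sum_i (\kappa_i^\Sigma)^2 \le (\sum_i \kappa_i^\Sigma)^2/(n-1)$; combining with the Cauchy--Schwarz inequality $(\sum_i \kappa_i^\Sigma)^2 \le (n-1)\sum_i (\kappa_i^\Sigma)^2$ forces equality, hence $\Sigma^{n-1}$ is totally umbilic — a round sphere or a hyperplane. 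But neither is compatible with $M_\infty$ being a translator with $T = e_{n+1}$ and $H_\infty(0) > 0$: for $\Sigma^{n-1} = S^{n-1}_r$, $|T^\perp|$ varies along the cross-section while $|\vec H|$ is constant, violating \eqref{eq:T}; and if $\Sigma^{n-1}$ is a hyperplane, then $M_\infty$ is itself a hyperplane and the translator equation forces $T$ tangent to it, giving $H_\infty \equiv 0$ and contradicting $H_\infty(0) \ge \varepsilon$.

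The remaining case, where $M_\infty$ is strictly convex, is the main obstacle. The plan is to exploit the graphical structure of $M$: writing $M = \graph(u|_\Omega)$ with $u$ strictly convex (by \eqref{eq:str.convexity}) and noting $H = 1/\sqrt{1+|Du|^2}$, the lemma reduces to showing $|Du(x)| \to \infty$ as $x \to \partial\Omega$ (or $|x| \to \infty$ if $\Omega$ is unbounded). Rewriting the translator equation in graphical form as $\Delta u\,(1+|Du|^2) + Du^{T} D^{2} u\, Du = 1 + |Du|^2$ and using $D^2 u \ge 0$ (so that $Du^{T} D^{2} u\, Du \le \Delta u\,|Du|^2$) yields the uniform lower bound $\Delta u \ge 1/2$ on $\Omega$, which by the mean-value inequality for subharmonic functions forces $u$ to grow at least quadratically on average; upgrading this to the pointwise divergence of $|Du|$ via the monotonicity of directional derivatives along rays from the unique minimum of $u$ is the hardest step. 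An alternative, perhaps cleaner route would be to return to the limit $M_\infty$ and show, via a diagonal subsequence extracting a limit direction from $(X_{j+1} - X_j)/\|X_{j+1}-X_j\|$, that $M_\infty$ inherits a non-trivial translational symmetry, directly contradicting its strict convexity.
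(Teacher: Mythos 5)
Your blow-up setup, the handling of the splitting case (umbilic cross-section via the cylindrical estimate, then ruling out the round cylinder and the hyperplane as translators with $H_\infty(0)>0$) all match the paper's argument in substance. But the proof is not complete: the case you yourself call ``the main obstacle'' --- a strictly convex limit $M_\infty$ --- is left open, and neither of your sketches closes it. The graphical route cannot work as stated: a lower bound $\Delta u\geq c>0$ plus convexity gives only average quadratic growth of $u$, which does not force $|Du|\to\infty$ along every sequence going to infinity; and, more fundamentally, there is nothing contradictory about a strictly convex translator limit with $H_\infty(0)\geq\varepsilon$ (a translate of the bowl soliton is exactly such an object), so no amount of analysis of $M_\infty$ in isolation can finish this case. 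The contradiction must come from the way the limit was produced, namely that the basepoints $X_j$ escape to infinity \emph{inside} the convex hypersurface $M^n$. Your ``alternative route'' via a limit of $(X_{j+1}-X_j)/\|X_{j+1}-X_j\|$ gestures at this but is not carried out, and invariance of $M_\infty$ under translation in that direction is not justified.

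The paper's key step, which is missing from your proposal, is elementary: fix $0\in M^n$, set $w_j:=X_j/\|X_j\|\to w$, and use convexity of the region $\overline\Omega$ bounded by $M^n$. Since $\overline\Omega$ contains the segments $[0,X_j]$, it contains the ray $\{sw:s\geq0\}$, and hence the sets $\{rsw+(1-r)X_j:s\geq 0,\ 0\leq r\leq1\}$; translating by $-X_j$ and letting $j\to\infty$, the limiting convex region $\overline\Omega_\infty$ contains the full line $\{sw:s\in\R\}$ through the boundary point $0\in M^n_\infty$. Therefore $\kappa_1=0$ somewhere on $M_\infty$, the strictly convex case never occurs, and the splitting theorem reduces everything to the cylinder/plane case you already know how to exclude. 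Without this line-containment argument (or a genuine substitute), your proof has an essential gap.
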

\begin{proof}
The proof is similar to \cite[Lemma 2.1]{Ha}. Suppose that the lemma does not hold. Then there is a sequence of points $\{X_j\}_{j=1}^\infty\subset M^n$ satisfying $\Vert X_j\Vert\to \infty$ and $\limsup_{j\to\infty}H(X_j)>0$. Passing to a subsequence, we can assume that $\liminf_{j\to\infty}H(X_j)>0$. By translational invariance of \eqref{eq:T}, we can assume, without loss of generality, that $0\in M^n$. Furthermore, after passing to a subsequence, $w_j:=X_j/\Vert X_j\Vert\to w\in S^n$. Consider the sequence $M^n_j:=M^n-X_j$. Since each $M^n_j$ satisfies the translator equation \eqref{eq:T}  and has mean curvature uniformly bounded by 1, it follows from standard regularity theory for solutions of either \eqref{eq:T} \cite{GT} or \eqref{eq:MCF} \cite{EcHu91,Eck} that, after passing to a subsequence, $M^n_j$ converges locally uniformly in $C^\infty$ to a weakly convex translator $M^n_\infty$. We claim that $M^n_\infty$ contains the line $\{sw: s\in \R\}$. First note that the closed convex region $\overline \Omega$ bounded by $M^n$ contains the ray $\{sw:s\geq 0\}$, since it contains each of the segments $\{sw_j:0\leq s\leq s_j\}$, where $s_j:=\Vert X_j\Vert$ and $w_j:=X_j/\Vert X_j\Vert$. By convexity, it also contains the set $\{rsw+(1-r)X_j:s\geq 0, 0\leq r\leq 1\}$ for each $j$. It follows that the closed convex region $\overline\Omega_j$ bounded by $M^n_j$ contains the set $\{rsw-rs_jw_j:s>0, 0\leq r\leq 1\}$. In particular, choosing $s=2s_j$, $\{\vartheta (w-w_j)+\vartheta w:0\leq \vartheta\leq s_j\}\subset \overline\Omega_j$ and, choosing $s=s_j/2$, $\{\vartheta w_j-\vartheta (w-w_j):-s_j/2\leq \vartheta\leq 0\}\subset \overline\Omega_j$. Taking $j\to\infty$, we find $\{sw:s\in\R\}\subset\overline \Omega_\infty$. The claim now follows from convexity of $\overline \Omega_\infty$ since $0\in M^n_\infty$. We conclude that $\kappa_1$ reaches zero somewhere on $M^n_\infty$. By the splitting theorem, the limit splits as an isometric product $M^n_\infty\cong \R\times\Sigma^{n-1}$; in particular, $\kappa_1\equiv 0$. On the other hand, by the strong maximum principle, we must have $H>0$ everywhere (since, by hypothesis, $H(0)>0$). The cylindrical estimate now implies that $\Sigma^{n-1}$ is umbilic (recall \eqref{eq:roundcrosssection}) and hence a round sphere. But this contradicts the fact that $M^n_\infty+te_{n+1}$ satisfies mean curvature flow.
\end{proof}

It follows that $H$ attains a maximum at some point $O$, which we call the `tip' of $M^n$. By translational invariance of \eqref{eq:T}, we can assume, without loss of generality, that $O$ is the origin.

Recall that the gradient field of the height function is given by
\[
V=\proj_{TM^n}(e_{n+1})=e_{n+1}+H\nu\,.
\]
By the translator equation \eqref{eq:T},
\[
\norm{V}^2=1-H^2\,
\]
and, differentiating \eqref{eq:T},
\[
V=-A^{-1}(\cd H)\,.
\]
Moreover, since $A$ is non-degenerate, at any critical point $X$ of $H$ we must have $V(X)=0$ and hence $\nu(X)=-e_{n+1}$. By strict convexity of $M^n$ (recall \eqref{eq:str.convexity}), we conclude that $H$ has precisely one critical point, the origin, and $H\leq H(0)=1$.

Next, observe that
\[
\cd V=HA\,.
\]
Since $A$ is positive definite, it follows from standard ODE theory that we can find, for each $X\in M^n\setminus\{0\}$, a unique integral curve $\phi_X:(0,\infty)\to M^n$ of $ V$ through $X$ such that
\[
\lim_{s\searrow 0}\phi_X(s)=0\quad\text{and}\quad\lim_{s\to \infty}\Vert\phi_X(s)\Vert=\infty\,.
\]
If we parametrize the integral curves by height, so that
\begin{equation}\label{eq:s}
h(\phi_X(s))=\inner{\phi_X(s)}{e_{n+1}}=s\,,
\end{equation}
then we obtain
\bann
\phi_{X}'=\frac{V\circ\phi_X}{1-H^2\circ\phi_X}=\frac{V\circ\phi_X}{\|V\circ \phi_{X}\|^2}\,.
\eann
Note that, by Lemma \ref{lem:Hto0}, the reparametrized curves are still defined on $(0,\infty)$.

We will use the improved gradient estimate \eqref{eq:gradient} to extract a lower bound for $H$ along the flow of $V$.
\begin{lemma}[Lower bound for $H$]\label{lem:Hlowerbound}
There exists $h_0>0$ such that
\[
H(X)\geq \frac{1}{\sqrt{4C_1h(X)}}
\]
for all $X\in  M^n$ with height $h(X)$ at least $h_0$.
\end{lemma}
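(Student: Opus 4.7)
The plan is to track $H$ along the integral curve $\phi_X$ of $V$ parametrized by height (so the parameter $s$ agrees with the height $h$), derive a differential inequality for $H^{-2}$ up from the tip, and then use Lemma~\ref{lem:Hto0} to absorb the initial value once $h$ is large.

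Differentiating the translator equation gives $\nabla H = -A(V)$, so using $\phi_X'(s) = V/(1-H^2)$ one computes
\[
\frac{dH}{ds} = \frac{\langle \nabla H, V\rangle}{1-H^2} = -\frac{\langle A(V), V\rangle}{1-H^2}.
\]
By the strict convexity \eqref{eq:str.convexity}, the right-hand side is non-positive, so $H$ decreases monotonically from $H = 1$ at the tip. The heart of the proof will be the pointwise estimate
\[
\langle A(V), V\rangle \le c\, C_1\, H^3
\]
for some dimensional constant $c$. To derive it, diagonalize $A$ and write $V = \sum v_i e_i$; convexity gives $\kappa_i \ge \kappa_1$, hence $|A(V)|^2 = \sum \kappa_i^2 v_i^2 \ge \kappa_1 \langle A(V), V\rangle$. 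Combined with $|A(V)|^2 = |\nabla H|^2 \le n|\nabla A|^2 \le n C_1 \kappa_1 H^3$ (the last inequality being the improved gradient estimate \eqref{eq:gradient}), division by $\kappa_1 > 0$ yields $\langle A(V), V\rangle \le n C_1 H^3$.

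Substituting this into the formula above and setting $y := H^{-2}$ produces the differential inequality
\[
\frac{dy}{ds} \le \frac{2nC_1}{1-H^2}.
\]
By Lemma~\ref{lem:Hto0} there exists $\tilde h_0$ such that $H \le 1/2$ on $\{h \ge \tilde h_0\}$; hence along each integral curve there is a first parameter $s_\ast \in [0,\tilde h_0]$ at which $H = 1/2$, and for $s \ge s_\ast$ the right-hand side is bounded by an absolute constant $c' = c'(n,C_1)$. Integrating from $s_\ast$ (where $y(s_\ast) = 4$) up to $s = h(X)$ and choosing $h_0$ large enough to absorb the constant term yields the claimed bound $H(X) \ge 1/\sqrt{4 C_1 h(X)}$ (with $C_1$ suitably enlarged to absorb the dimensional factors coming from $|\nabla H|^2 \le n|\nabla A|^2$). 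The main technical obstacle is the sharp $H^3$ decay of $\langle A(V), V\rangle$; without using convexity to cancel the factor of $\kappa_1$, the improved gradient estimate by itself would only yield a strictly weaker rate of decay for $H$.
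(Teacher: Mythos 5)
Your proof is correct and follows essentially the same route as the paper: both arguments integrate a differential inequality for $H^{-2}$ along the integral curves of $V$ parametrized by height, using strict convexity to cancel the factor of $\kappa_1$ in the gradient estimate \eqref{eq:gradient} (you via $\kappa_1\inner{A(V)}{V}\le |A(V)|^2=|\cd H|^2$, the paper via $-\cd_{\phi'}H\ge\kappa_1$) and then using Lemma \ref{lem:Hto0} to bound $1-H^2$ from below at large heights before integrating. The only discrepancy is the dimensional factor from $|\cd H|^2\le n|\cd A|^2$, which forces you to enlarge $C_1$ and hence gives the stated lower bound only up to a constant depending on $n$; this is harmless for every later application of the lemma (and the paper's own proof silently makes the same identification of $|\cd H|$ with $|\cd A|$ at exactly this step).
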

\begin{proof}
Let $\phi:(0,\infty)\to M$ be an integral curve of $V$ emanating from the tip and parametrized by height. Set $f(s):=H^{-1}(\phi(s))$. Applying the gradient estimate \eqref{eq:gradient}, we obtain
\begin{equation}\label{eq:Hlowerbound1}
(f')^2\leq \frac{|\cd H|^2|\phi'|^2}{H^4}\leq \frac{C_1\kappa_1f}{1-H^2}\,.
\end{equation}
On the other hand,
\bann
-\nabla_{\phi'}H=A(\phi',V)=\frac{A(V,V)}{\norm{V}^2}\geq \kappa_1\,.
\eann
That is,
\begin{equation}\label{eq:Hlowerbound2}
\kappa_1f\leq \frac{f'}{f}\,.
\end{equation}
Putting \eqref{eq:Hlowerbound1} and \eqref{eq:Hlowerbound2} together yields
\[
(f^2)'\leq \frac{2C_1}{1-H^2}\,.
\]
By Lemma \ref{lem:Hto0}, there exists $h_1>0$ such that $H(X)\leq \frac{1}{\sqrt{3}}$ for all $X\in M^n$ with height $h(X)$ at least $h_1$. Thus, for any $s\ge h_1$, we obtain $(f^2)'\leq 3C_1$. Integrating this between $h_1$ and $s$ yields
\[
\frac{1}{H^2(\phi(s))}\le 3C_1(s-h_1)+\frac{1}{H^2(\phi(h_1))}\le 4C_1 s
\]
with the last inequality being true provided that $s$ is large enough; in particular, $s\ge \max\{h_1, (C_1 H^2(\phi(h_1)))^{-1}\}$. Since $\phi$ is parametrized by height, the lemma then follows with $h_0=\max\{h_1, (C_1 H^2(h_1))^{-1}\}$, where $H(h_1):=\min \{H(X):X\in M^n, h(X)=h_1\}>0\,$.
\end{proof}

Next, we derive a lower bound for the `girth' of $M^n$. This estimate plays a key role in obtaining an upper bound for $H$.

\begin{lemma}[Girth estimate]\label{lem:girthestimate}
There exists $h_0<\infty$ such that 
\[
\Vert\proj_{\R^n\times\{0\}}X\Vert\ge\sqrt {\frac{h(X)}{16C_1}}
\] 
for any $X\in M^n$ with height $h(X)$ at least $h_0$.
\end{lemma}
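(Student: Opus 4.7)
The plan is to pass to the graph representation of $M^n$. Since $M^n$ is strictly convex with tip at the origin and horizontal tangent plane there ($\nu(0)=-e_{n+1}$), it is already noted in Section \ref{sec:prelims} that we can write $M^n$ as the graph of a smooth convex function $u:\Omega\to\R$ over $\Omega:=\proj_{\R^n\times\{0\}}(M^n)$, necessarily with $u(0)=0=\min u$ and $\nabla u(0)=0$. Writing $X=(y,u(y))$, we have $\Vert\proj_{\R^n\times\{0\}}X\Vert=\Vert y\Vert$ and $h(X)=u(y)$, so the girth estimate reduces to the pointwise inequality $\Vert y\Vert\ge \sqrt{u(y)/(16C_1)}$ whenever $u(y)$ is sufficiently large.

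The key step is the standard graphical formula $H=1/\sqrt{1+|\nabla u|^2}$ for the mean curvature of a convex graph, which inverts to $|\nabla u|^2=H^{-2}-1$. Combined with the lower bound $H(X)\ge (4C_1 h(X))^{-1/2}$ from Lemma \ref{lem:Hlowerbound}, this converts the intrinsic bound along integral curves of $V$ into the pointwise gradient estimate $|\nabla u(y)|^2\le 4C_1\, u(y)$ valid whenever $u(y)\ge h_0$, with $h_0$ the constant produced by Lemma \ref{lem:Hlowerbound}.

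To convert this gradient bound into the girth estimate, the plan is to exploit convexity of $u$ with minimum at the origin: the supporting hyperplane inequality $u(0)\ge u(y)-\nabla u(y)\cdot y$ rearranges to $u(y)\le \nabla u(y)\cdot y$. Cauchy--Schwarz combined with the gradient bound then yields
\[
u(y)\;\le\;|\nabla u(y)|\,\Vert y\Vert\;\le\; 2\sqrt{C_1\, u(y)}\,\Vert y\Vert,
\]
which upon dividing by $\sqrt{u(y)}$ gives $\Vert y\Vert^{2}\ge u(y)/(4C_1)$, already stronger than the stated conclusion (so some slack is available, which is convenient when choosing $h_0$). I do not anticipate a substantive obstacle; the argument is essentially three lines, and the only real content is the passage via the graph formula from the curve-wise lower bound on $H$ to a pointwise bound on $|\nabla u|$, after which the convexity-plus-Cauchy--Schwarz step is automatic.
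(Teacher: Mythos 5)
Your argument is correct, and it rests on the same two ingredients as the paper's proof (Lemma \ref{lem:Hlowerbound} and the graphical/convex structure of $M^n$), but the final mechanism is different and in fact cleaner. The paper argues by contradiction along the lifted radial segment from the tip to $X=(x,u(x))$: it bounds $h'\leq 1/H$ along that curve, uses the mean value theorem to produce an intermediate point where $1/H$ is large, and must first arrange (via the auxiliary height $h_1$ and the factor $2$, which is where the constant $16C_1$ comes from) that this intermediate point still lies above the height at which Lemma \ref{lem:Hlowerbound} applies. You avoid the intermediate point altogether: the identity $1+|Du|^2=H^{-2}$ converts the lower bound on $H$ into $|Du(y)|^2\leq 4C_1u(y)$ \emph{at the point in question}, and the supporting-hyperplane inequality $u(y)\leq Du(y)\cdot y$ for the convex function $u$ with minimum $u(0)=0$ at the tip, together with Cauchy--Schwarz, gives $\Vert y\Vert^2\geq u(y)/(4C_1)$ directly, with no contradiction setup and with a better constant than $16C_1$. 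One small correction of attribution: $H=1/\sqrt{1+|Du|^2}$ is \emph{not} a general formula for the mean curvature of a graph (that would be $\dvg\bigl(Du/\sqrt{1+|Du|^2}\bigr)$); it is precisely the translator equation \eqref{eq:T} written graphically, i.e.\ $H=-\inner{\nu}{e_{n+1}}$ with the normalization $T=e_{n+1}$, exactly as the paper uses it in the heuristic at the start of its proof. With that justification stated correctly, your three-line argument is a valid, slightly sharper replacement for the paper's proof.
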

\begin{proof}
The idea of the proof is the following: If the claim did not hold, then, writing $M=\graph u$, there would be a point where $u$ is both large and has large gradient (compared to $\sqrt h$). But this would contradict the lower bound $H\gtrsim h^{-\frac{1}{2}}$ since $H=\frac{1}{\sqrt{1+|Du|^2}}$.

By Lemma \ref{lem:Hlowerbound}, there exists $h_1>0$ such that
\ba\label{eq:Hlowerbond1}
H(X)\geq \frac{1}{\sqrt{4C_1h(X)}}
\ea
for all $X\in  M^n$ with height $h(X)\geq h_1$. Suppose, contrary to the claim, that there is a point $X=(x,u(x))\in M^n$ with $h(X)\geq h_0:=2h_1$ but
\[
\frac{h(X)}{\norm{x}}>\sqrt{16C_1h(X)}\,.
\]
Set $\ell:=\norm{x}$ and consider the curve $\gamma:[0,\ell]\to M^n$ given by $\gamma(s):=(\hat\gamma(s),u(\hat\gamma(s)))$, where $\hat\gamma(s):=s\frac{x}{\ell}$ is the straight line in $\Omega$ joining 0 and $x$. Slightly abusing notation, set $h(s):=h(\gamma(s))$. Then
\[
h'=\cd_{\gamma'}h\leq \norm{V}\sqrt{1+(D_{\hat\gamma'}u)^2}\leq \sqrt{1-H^2}\sqrt{1+|Du|^2}=\frac{\sqrt{1-H^2}}{H}\leq\frac{1}{H}\,.
\]
Let $s_1\in[0,\ell]$ be the point at which $h(\gamma(s_1))=h_1$. By the mean value theorem, there is a point $s_2\in [s_1,\ell]$ such that
\bann
\frac{1}{H(\gamma(s_2))}\geq h'(s_2)=\frac{h(X)-h_1}{\ell-s_1}\geq \frac{h(X)}{2\ell}>\sqrt{\frac{16}{4}C_1h(X)}\ge\sqrt{4C_1h(\gamma(s_2))}\,.
\eann
This contradicts \eqref{eq:Hlowerbond1} and we conclude that
\[
\Vert\proj_{\R^n\times\{0\}}X\Vert\geq\sqrt{\frac{h(X)}{16C_1}}
\]
for all $X\in M^n$ with height $h(X)\geq h_0:=2h_1$.
\end{proof}

Next, we find at each height $h$ a point with $H\sim h^{-\frac{1}{2}}$.
\begin{lemma}\label{lem:Hupperbound}
There exists $h_0<\infty$  such that, for any $h\ge h_0$, there is a point $X\in M\cap \overline B_{\sqrt{2nh}}(h e_{n+1})$ satisfying
\[
H(X)\le \sqrt{\frac{h_0}{h}}\,,
\]
where $\overline B_R(X)$ denotes the closed ball in $\R^{n+1}$ of radius $R$ centred at $X$.
\end{lemma}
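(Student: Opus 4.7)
Set $h_0 := 2n$. The plan is to construct a point $X_0 \in M \cap \overline B_{\sqrt{2nh}}(he_{n+1})$ via the mean curvature flow avoidance principle, and then to exploit the strict convexity of $M$ as a graph (recall \eqref{eq:str.convexity}) to bound $H(X_0)$ from above.

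For existence, I will compare the translator, viewed as the mean curvature flow solution $M_t := M + te_{n+1}$, with the shrinking MCF sphere $\Sigma_t := \partial B_{\sqrt{2n(h-t)}}(he_{n+1})$, $t \in [0, h)$. Suppose for contradiction that $M \cap \overline B_{\sqrt{2nh}}(he_{n+1}) = \emptyset$. Since $(0,h)$ lies in the convex region above $M$ (as $u(0) = 0 < h$), the initial ball sits entirely in this region. By avoidance, $\Sigma_t \cap M_t = \emptyset$ for all $t \in [0, h)$. Shifting back to the frame of $M$ by $-te_{n+1}$ and setting $\tau := h - t$, this reads as $\partial B_{\sqrt{2n\tau}}(\tau e_{n+1}) \cap M = \emptyset$ for all $\tau \in (0, h]$, with the back-shifted ball lying in the convex region above $M$. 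However, whenever $\tau < 2n$, the tip (origin) of $M$ lies in the interior of $B_{\sqrt{2n\tau}}(\tau e_{n+1})$, since its distance $\tau$ from the center $\tau e_{n+1}$ is strictly less than the radius $\sqrt{2n\tau}$. This contradicts the ball lying above $M$, so $M$ must meet $\overline B_{\sqrt{2nh}}(he_{n+1})$ for $h\geq h_0$.

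For the mean curvature bound, let $X_0 = (x_0, u(x_0))$ be any point in this intersection. The ball constraint yields $\|x_0\| \leq \sqrt{2nh}$ and $u(x_0) \geq h - \sqrt{2nh - \|x_0\|^2}$, the latter being non-negative for $h \geq 2n$. By the strict convexity of $u$ with $u(0) = 0$, we have $|Du(x_0)| \geq u(x_0)/\|x_0\|$. Writing $s := \sqrt{2nh - \|x_0\|^2}$ so that $\|x_0\|^2 = 2nh - s^2$, the bounds combine to give
\[
1 + |Du(x_0)|^2 \geq \frac{\|x_0\|^2 + (h-s)^2}{\|x_0\|^2} = \frac{h(h + 2n - 2s)}{2nh - s^2} \geq \frac{h}{2n},
\]
where the last inequality is equivalent to $(s - 2n)^2 \geq 0$. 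Since $H = (1+|Du|^2)^{-1/2}$ on the graph, this gives $H(X_0) \leq \sqrt{2n/h} = \sqrt{h_0/h}$. The main obstacle is the avoidance step, where one must verify that, in the absence of intersection, the initial ball lies above rather than below $M$, and that the parameters $(\sqrt{2nh},\, he_{n+1})$ are calibrated precisely so that the back-shifted family just begins to swallow the tip at $\tau = 2n$; the remaining estimate is then an elementary convexity-and-algebra computation.
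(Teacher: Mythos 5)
Your proof is correct, and while the existence half coincides with the paper's argument, the curvature bound is obtained by a genuinely different mechanism. For existence, the paper's Claim \ref{claim:ball1} is exactly your avoidance comparison with the shrinking sphere (phrased via collapse times rather than your ``tip gets swallowed at $\tau<2n$'' variant, which additionally needs the small topological point that the sphere family stays in the convex region -- you flag this, and it is routine since the family is connected and disjoint from $M=\pd K$ while the initial center lies in $\mathrm{int}\,K$). For the bound on $H$, however, the paper proceeds quite differently: it first proves Claim \ref{claim:ball2} using the girth estimate (Lemma \ref{lem:girthestimate}, which itself rests on the gradient estimate via Lemma \ref{lem:Hlowerbound}) to show a ball of radius $\sim\sqrt h$ about $he_{n+1}$ misses $M^n$, then expands the ball centred at $he_{n+1}$ to its first contact point and uses the interior tangent-sphere comparison $H\le n/(\rho\sqrt h)$. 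You instead bound $H$ at \emph{every} point of $M\cap\overline B_{\sqrt{2nh}}(he_{n+1})$ directly: the ball constraint forces $u(x_0)\ge h-s$ and $\|x_0\|^2=2nh-s^2$, convexity of $u$ with $u(0)=0$ gives $|Du(x_0)|\ge u(x_0)/\|x_0\|$, and the graphical relation $H=(1+|Du|^2)^{-1/2}$ (already used in the paper's girth estimate) plus the identity $2n\,h(2n+h-2s)-h(2nh-s^2)=h(s-2n)^2\ge0$ yields $H\le\sqrt{2n/h}$. Your route is more elementary -- it avoids the girth estimate and the tangency argument entirely, gives the explicit constant $h_0=2n$, and establishes the bound at all intersection points rather than one tangency point -- whereas the paper's route additionally locates the contact point at distance $\gtrsim\sqrt h$ from the axis, information of the sort it exploits (together with the girth estimate, which is still needed independently) in the blow-down lemma. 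The only loose ends in your write-up are cosmetic: the gradient inequality $|Du(x_0)|\ge u(x_0)/\|x_0\|$ requires $x_0\ne0$, which can only fail at $h=h_0=2n$ where the claimed bound is the trivial $H\le1$, and one should say explicitly that $h\ge 2n$ guarantees $h-s\ge0$ so that the squared lower bound for $u(x_0)$ is legitimate.
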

\begin{proof}
We first show that at each height $h$, the ball of radius $\sqrt{2nh}$ centred at $h e_{n+1}$ intersects $M^n$.
\begin{claim}\label{claim:ball1}
For each height $h>0$, $\overline B_{\sqrt{2nh}}(he_{n+1})\cap M^n\neq\emptyset$.
\end{claim}
\begin{proof}
Under mean curvature flow, the tip of the translator reaches the point $he_{n+1}$ after time $t=h$. On the other hand, under mean curvature flow, the radius of $\pd B_R(he_{n+1})$ shrinks to the point $h e_{n+1}$ in time $t=R^2/2n$. Thus, if the ball $\overline B_R(he_{n+1})$ does not intersect $M^n$, the avoidance principle necessitates $R^2<2nh$.
\end{proof}

On the other hand, using Lemma \ref{lem:girthestimate}, the ball in the previous claim can be scaled so that it no longer intersects $M^n$.
\begin{claim}\label{claim:ball2}
There exists $0<h_0<\infty$ such that $B_{\sqrt{h/h_0}}(h e_{n+1})\cap M^n=\emptyset$ for all $h\geq h_0$. 
\end{claim}
\begin{proof}
By Lemma \ref{lem:girthestimate}, there exists $h_1>0$ such that $\Vert\proj_{\R^n\times\{0\}}X\Vert\geq\sqrt{\frac{h(X)}{16C_1}}$ for any $X\in M^n$ with height $h(X)$ at least $h_1$. Set $h_2:=\max\{1,2h_1\}$, $R:=\d\sqrt h$, where $\d:=\frac{1}{2}\min\{\frac{1}{\sqrt{16C_1}},1\}$, and consider, for any $h\ge h_2$, the cylinder $Q_R$ centred at the point $he_{n+1}$ with radius $R$ and height $2R$. Then, for any $X\in Q_R\cap M^n$, we have $h(X)\geq h-R\geq h_1$, so that
\[
R\geq \|\proj_{\R^n\times\{0\}}(X)\|\geq \sqrt{\frac{h(X)}{16C_1}}\geq\sqrt{\frac{h-R}{16C_1}}\,.
\]
Rearranging, this becomes
\[
(1-16C_1\d^2)\sqrt{h}\leq\delta\,.
\]
But this implies $h\le 2/3$. To avoid a contradiction, we must conclude that $Q_R\cap M^n=\emptyset$. The claim then follows with $h_0:=\max\{h_2,\d^{-2}\}$.

\end{proof}
By Claim \ref{claim:ball2}, there exists $h_1<\infty$ such that $B_{n\sqrt{h/h_1}}(h e_{n+1})\cap M^n=\emptyset$ for all $h\geq \frac{h_1}{n^2}$. Set $h_0:=\max\{h_1,\frac{n}{2}\}$ and define, for any $h\ge h_0$,
\[
\rho=\inf\{r:B_{r\sqrt h}(h e_{n+1})\cap M\ne\emptyset\}.
\]
By Claims \ref{claim:ball1} and \ref{claim:ball2}, we know that $\frac{n}{\sqrt{h_0}}\le \r\le \sqrt{2n}$. Thus, there exists a point $X\in \ov B_{\r\sqrt h}(h e_{n+1})\cap M^n$. Since $M^n$ and $B_{\r\sqrt h}(he_{n+1})$ are tangent at $X$ and $M^n$ lies outside of $ B_{\r\sqrt h}(he_{n+1})$, we have $H(X)\le\frac{n}{\rho\sqrt h}\le \sqrt{\frac{h_0}{h}}$.
\end{proof}

Finally, we need to show that $\kappa_1/H$ goes to zero as $h\to\infty$.

\begin{lemma}[Asymptotics for $\frac{\kappa_1}{H}$]\label{lem:kappa1bound}
For any sequence of points $X_j$ with $h(X_j)\to\infty$,
\[
\frac{\kappa_1}{H}(X_j)\to 0\,.
\]
\end{lemma}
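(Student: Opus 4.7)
The plan is to argue by contradiction via a rescaling argument. Suppose there were $\delta>0$ and a sequence $X_j\in M^n$ with $h(X_j)\to\infty$ and $\kappa_1(X_j)/H(X_j)\geq \delta$. By Lemma \ref{lem:Hto0}, $\lambda_j:=H(X_j)\to 0$, so I would consider the rescaled surfaces $\wt M_j:=\lambda_j(M^n-X_j)$. These satisfy the translator equation with translation vector $\wt T_j:=\lambda_j^{-1}e_{n+1}$, which I rewrite as
\[
\la \wt\nu_j,e_{n+1}\ra=-\lambda_j\wt H_j\,.
\]
Each $\wt M_j$ also inherits both the (scale-invariant) cylindrical estimate and the improved gradient estimate. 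Since strict convexity gives $\kappa_1/H\leq 1/n$, \eqref{eq:gradient} yields $|\wt\nabla\wt A_j|/\wt H_j^{\,2}\leq \sqrt{C_1/n}$ uniformly on each $\wt M_j$, and Corollary \ref{cor:convergence} therefore produces, after passing to a subsequence, a weakly convex pointed $C^2$ limit $(\wt M_\infty,0)$ on an intrinsic ball of fixed positive radius, with $\wt H_\infty(0)=1$ and $\wt\kappa_1^{\,\infty}(0)\geq \delta$.

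The next step is to identify $\wt M_\infty$ near $0$ as (a piece of) a round cylinder with axis $e_{n+1}$. Since $\lambda_j\to 0$ while $\wt H_j$ is uniformly bounded on the ball (thanks to Lemma \ref{lem:convergence}), the displayed identity above forces $\la\wt\nu_\infty,e_{n+1}\ra\equiv 0$, so $e_{n+1}$ is tangent to $\wt M_\infty$ at every point; integrating the flow of $e_{n+1}$ shows $\wt M_\infty$ is locally a cylinder $\R\, e_{n+1}\times\Sigma^{n-1}$ over a $C^2$ weakly convex hypersurface $\Sigma\subset e_{n+1}^{\perp}$. The limiting cylindrical estimate $|\wt A_\infty|^2\leq\wt H_\infty^{\,2}/(n-1)$, together with the fact that one principal curvature of the cylinder (in the $e_{n+1}$ direction) vanishes identically, reads $\sum_{i=1}^{n-1}\mu_i^2\leq(\sum_{i=1}^{n-1}\mu_i)^2/(n-1)$ for the remaining principal curvatures $\mu_i$, which coincide with those of $\Sigma$. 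The Cauchy--Schwarz equality case then forces $\mu_1=\cdots=\mu_{n-1}$, so $\Sigma$ is umbilic, hence a round sphere; in particular the smallest principal curvature of $\wt M_\infty$ vanishes everywhere, contradicting $\wt\kappa_1^{\,\infty}(0)\geq\delta>0$.

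The main obstacle will be extracting usable information from the translator equation as the translation vector $\wt T_j$ blows up. The crucial point is that the equation, rewritten in the form $\la\wt\nu_j,e_{n+1}\ra=-\lambda_j\wt H_j$, still passes to a nontrivial limit precisely because $\wt H_j$ stays bounded on fixed-size intrinsic balls, and it forces the tangential constraint $\la\wt\nu_\infty,e_{n+1}\ra\equiv 0$ that underlies the cylindrical structure of $\wt M_\infty$. Once that structure is in place, the rigidity of the cylindrical estimate delivers the contradiction without further work.
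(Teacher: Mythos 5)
Your argument is correct, and it is in fact a streamlined version of the paper's: the key mechanism is identical (rescale by the mean curvature, use the scale-invariant gradient estimate together with Corollary \ref{cor:convergence} to extract a $C^2$ limit on a ball of definite size, and observe that the rescaled translator identity $\la\nu_j,e_{n+1}\ra=\pm\lambda_j H_j$ with $\lambda_j\to 0$ forces $\la\nu_\infty,e_{n+1}\ra\equiv 0$, hence $A_\infty(e_{n+1},\cdot)\equiv 0$ and $\kappa_1^\infty\equiv 0$ by weak convexity). Where you differ is the contradiction scheme: you base the blow-up directly at the bad points $X_j$ where $\kappa_1/H\geq\delta$, so the contradiction is purely local at the basepoint ($\kappa_1^\infty(0)\geq\delta$ versus $\kappa_1^\infty(0)=0$), whereas the paper rescales at auxiliary points $Y_j$ minimizing $\kappa_1/H$ over the sublevel sets $\{h\leq h_j\}$, invoking Hamilton's noncompactness result to get $\tfrac{\kappa_1}{H}(Y_j)\to 0$, the strong maximum principle for $A/H$ to force $h(Y_j)=h_j$, and an iteration/diagonal argument producing a \emph{global} round cylinder, before transferring the contradiction back to $X_j$ via compactness of the height slices. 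Your route needs none of these ingredients (you could even skip the cylinder/umbilic discussion entirely, since $A_\infty(e_{n+1},\cdot)\equiv 0$ already gives $\kappa_1^\infty(0)=0$); what the paper's longer route buys is the stronger structural conclusion that the rescaled limits are entire round orthogonal cylinders, which is reused almost verbatim in the subsequent blow-down lemma. Two cosmetic points: the cross-section you obtain is only a piece of a round sphere (the limit lives on an intrinsic ball of radius $(2C)^{-1}$), and the bound $\kappa_1/H\leq 1/n$ needs only $H>0$ and $\kappa_1\leq\kappa_i$, not convexity — neither affects the argument.
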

\begin{proof}
Suppose there exists a sequence of points $X_j\in M^n$ with $h_j:=h(X_j)\to\infty$ but $\limsup_{j\to\infty}\frac{\kappa_1}{H}(X_j)>0$. Passing to a subsequence, we can assume that $\liminf_{j\to\infty}\frac{\kappa_1}{H}(X_j)>0$. We may choose another sequence of points $Y_j\in M^n$ such that 
\[
\frac{\kappa_1}{H}(Y_j)=\min_{U_j}\frac{\kappa_1}{H}\,,
\]
where $U_j:=\{X\in M^n:h(X)\leq h_j\}$. Since $M^n$ is non-compact, we know that $\frac{\kappa_1}{H}(Y_j)\to 0$ \cite{Hm94}. Moreover, the strong maximum principle implies $h(Y_j)=h_j\to\infty$ since, combining \eqref{eq:LaplaceA} and \eqref{eq:LaplaceH}, the tensor $Z:=A/H$ satisfies
\bann
-\Delta Z(u,u)=\cd_VZ(u,u)+2\inner{\cd Z(u,u)}{\frac{\cd H}{H}}\,.
\eann

Now set $\lambda_j:=H(Y_j)$ and consider the sequence $M^n_j:=\lambda_j(M^n-Y_j)$. Then
\[
H_j(0)= 1\quad \text{ and }\quad \frac{\kappa^j_1}{H_j}(0)\to 0  
\,,
\]
where $H_j$ and $\k_1^j$ are the mean curvature and smallest principal curvature, respectively, of $M^n_j$. It now follows from the gradient estimate \eqref{eq:gradient} (see Corollary \ref{cor:convergence}) that, after passing to a subsequence, the sequence $M^n_j\cap B_j$ converges locally uniformly in $C^2$ to a non-empty limit $M^n_\infty\cap B_\infty$, where $B_j$ is the intrinsic ball in $M^n_j$ of radius $(2C_1)^{-1}$ about the origin. But since the sequence $M^n_j\cap B_j$ satisfies
\[
H_j(X)=\lambda_j^{-1}\inner{\nu_j(X)}{e_{n+1}}\,,
\]
where $\nu_j(X)$ is the normal to $M^n_j$, the limit $M^n_\infty\cap B_\infty$ satisfies
\ba\label{eq:axis}
\inner{\nu_\infty}{e_{n+1}}\equiv 0\,,
\ea
where $\nu_\infty$ is the normal to $M^n_\infty$. In particular, $\kappa^\infty_1\equiv 0$ in $B_\infty$, and we conclude from the cylindrical estimate that $M^n_\infty\cap B_\infty$ lies in a cylinder of radius $(n-1)$. But this implies that the ratio $|\cd H_j|/H_j^2$ goes to zero on all of $B_j$, and, iterating Corollary \ref{cor:convergence} and passing to a diagonal subsequence, we deduce that $M^n_j$ converges locally uniformly in $C^2$ to a round orthogonal cylinder of radius $(n-1)$. Moreover, by \eqref{eq:axis}, the axis of the cylinder is parallel to $e_{n+1}$. By compactness of the constant height slices, a subsequence of $X_j$ converges to a point in the limit of height zero. But this contradicts $\liminf_{j\to\infty}\frac{\kappa_1}{H}(X_j)>0$. 
\end{proof}

We are now ready to prove that the blow-down of our translator is the shrinking cylinder.
\begin{lemma}
Denote by $M_t^n:=M^n+te_{n+1}$. Given any sequence $h_j\to\infty$, the sequence $\{M_{t,j}^n\}_{j=1}^\infty$ of mean curvature flows
\ba\label{eq:rescaledMCF}
M_{t,j}^n:=h_j^{-\frac{1}{2}}\lb M^n_{h_jt}-h_je_{n+1}\rb\,,\quad t\in(-\infty,1)
\ea
converges locally uniformly in $C^\infty$ to the shrinking cylinder $S^{n-1}_{r(t)}\times \R$, where $r(t):=\sqrt{2(n-1)(1-t)}$.
\end{lemma}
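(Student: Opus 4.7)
\emph{Setup and uniform estimates.} Each $\{M_{t,j}^n\}_{t\in\R}$ is the parabolic rescaling of the translator MCF $\{M^n_s\}_{s\in\R}$ about the spacetime point $(h_je_{n+1},h_j)$ by the factor $h_j^{-1/2}$, hence is itself a smooth MCF. A point of $M_{t,j}^n$ lying in $\ov B_R(0)$ at time $t\in(-\infty,1)$ corresponds, for $j$ large, to a point of $M^n$ of height within $O(Rh_j^{1/2})$ of $h_j(1-t)$, and Lemmas \ref{lem:Hlowerbound} and \ref{lem:Hupperbound} bound the unrescaled mean curvature there by a constant multiple of $(h_j(1-t))^{-1/2}$; after rescaling this gives $H_j\lesssim (1-t)^{-1/2}$. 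The gradient estimate \eqref{eq:gradient} and Corollary \ref{cor:convergence} promote this to intrinsic-ball control, and standard parabolic regularity for \eqref{eq:MCF} upgrades the convergence to $C^\infty$ along a subsequence, producing a smooth limit MCF $\{N_t\}_{t<1}$ of weakly convex hypersurfaces.

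\emph{Cylindrical structure.} Scale invariance of $\k_1/H$ combined with Lemma \ref{lem:kappa1bound} forces $\k_1\equiv 0$ on each $N_t$, and passing the cylindrical estimate \eqref{eq:roundcrosssection} to the limit then forces $\k_2=\dots=\k_n$ by the Cauchy-Schwarz equality case. Hence $N_t$ locally factors as a round cylinder. Rescaling the translator equation yields $\inner{\nu_j}{e_{n+1}}=H_j/\sqrt{h_j}\to 0$, so $\inner{\nu_\infty}{e_{n+1}}\equiv 0$ and the cylinder axis is parallel to $e_{n+1}$. Convexity of $M^n$ upgrades the local factorization to $N_t=S^{n-1}_{r(t)}(\hat y_0)\times\R$ for some $\hat y_0\in\R^n\times\{0\}$ and $r(t)>0$; since $\{N_t\}$ evolves by MCF through such cylinders, $\hat y_0$ is $t$-independent and $r(t)^2=r_0^2-2(n-1)t$, and existence of $N_t$ throughout $(-\infty,1)$ forces $r_0^2\geq 2(n-1)$.

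\emph{Pinning down the constants.} Applying the avoidance argument of Claim \ref{claim:ball1} at height $h_j(1-t)$ (the remaining time for the tip of $M^n_{h_jt}$ to reach $h_je_{n+1}$) and rescaling yields a point of $M_{t,j}^n$ within distance $\sqrt{2n(1-t)}$ of the origin, so the minimum distance from the origin to $N_t$, which equals $\bigl|\norm{\hat y_0}-r(t)\bigr|$, satisfies $\bigl|\norm{\hat y_0}-r(t)\bigr|\leq\sqrt{2n(1-t)}$. Letting $t\nearrow 1$ forces $\norm{\hat y_0}=r(1)=\sqrt{r_0^2-2(n-1)}$, and consequently
\[
\bigl|\norm{\hat y_0}-r(t)\bigr|=\frac{\bigl|\norm{\hat y_0}^2-r(t)^2\bigr|}{\norm{\hat y_0}+r(t)}=\frac{2(n-1)(1-t)}{\norm{\hat y_0}+r(t)}\,.
\]
On the other hand, Lemma \ref{lem:girthestimate} passed to the limit yields $\bigl|\norm{\hat y_0}-r(t)\bigr|\geq\sqrt{(1-t)/(16C_1)}$ on $N_t$, so combining with the previous identity,
\[
\norm{\hat y_0}+r(t)\leq 8(n-1)\sqrt{C_1(1-t)}\longrightarrow 0
\]
as $t\nearrow 1$. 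Hence $\norm{\hat y_0}=r(1)=0$, i.e., $\hat y_0=0$ and $r_0=\sqrt{2(n-1)}$. Since every subsequence produces the same limit, the full sequence converges. The principal obstacle is this final step: producing some cylindrical limit is routine once the curvature bounds and $\k_1\equiv 0$ are in hand, but pinning the exact radius $\sqrt{2(n-1)}$ and confirming $\hat y_0=0$ demands the precise interplay between the outer avoidance bound against shrinking spheres and the inner girth bound extracted from the gradient estimate, together with the sharp cylindrical ODE $r(t)^2=r_0^2-2(n-1)t$.
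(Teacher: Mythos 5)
Your overall strategy (blow down, identify cylindrical slices via $\kappa_1/H\to 0$ and the cylindrical estimate, then pin the axis and radius) is the paper's, and your final ``pinning down'' paragraph is correct -- indeed it is more quantitative than the paper's treatment, which fixes the radius by observing from \eqref{eq:convergetocylinder} that the slice radius scales like $\sqrt{1-t}$ and hence tends to $0$ as $t\to1$, and fixes the axis because the enclosed convex region contains the $e_{n+1}$-axis; your combination of the rescaled Claim \ref{claim:ball1} bound with the rescaled girth estimate is a valid alternative. The genuine gap is in your first paragraph, the compactness step. Lemma \ref{lem:Hupperbound} does \emph{not} bound $H$ at every point of height $\approx h_j(1-t)$; it only produces \emph{one} point per height, lying within distance $\sqrt{2nh}$ of the axis, at which $H\lesssim h^{-1/2}$. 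So the asserted locally uniform bound $H_j\lesssim(1-t)^{-1/2}$ on $\overline B_R(0)$ does not follow from Lemmas \ref{lem:Hlowerbound} and \ref{lem:Hupperbound}. Moreover, the crude gradient bound $|\cd H|\le CH^2$ together with Corollary \ref{cor:convergence} only propagates curvature control over an intrinsic ball of a \emph{fixed} radius comparable to $(2C_1)^{-1}$ after rescaling (the doubling radii in Lemma \ref{lem:convergence} shrink geometrically, so one never gets past distance $\sim 1/(CH)$ from the good point). Hence your claim that ``standard parabolic regularity ... produces a smooth limit MCF'' on all of space is unsupported for large $R$: some additional mechanism is needed to extend the curvature bounds from a fixed-size neighbourhood of the good points to arbitrarily large regions.

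That mechanism is exactly what the paper supplies (in the proof of Lemma \ref{lem:kappa1bound}, reused here): one first identifies the limit on the fixed-size ball as a piece of a round cylinder (using $\kappa_1/H\to0$, $\inner{\nu_\infty}{e_{n+1}}\equiv0$ and the cylindrical estimate via \eqref{eq:roundcrosssection}); this forces $\kappa_1/H\to0$, and hence by the \emph{improved} gradient estimate \eqref{eq:gradient} forces $|\cd H_j|/H_j^2\to0$, uniformly on that piece; with the gradient constant improving, Corollary \ref{cor:convergence} can be iterated and a diagonal subsequence extracted, yielding convergence of the whole rescaled slice to a complete round orthogonal cylinder. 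Only after this does one obtain $H\sim h^{-1/2}$ everywhere (the bound you asserted at the outset) and then convergence at all scales and the upgrade to a smooth limit flow. Your write-up also inverts the logical order: you invoke Lemma \ref{lem:kappa1bound} to show the limit is cylindrical \emph{after} assuming the limit exists on all of $\overline B_R(0)$, whereas the cylindrical structure (or, alternatively, a separate argument, e.g.\ via convexity of the graph function) is needed to produce that limit in the first place. As it stands, your proof establishes the result only on a neighbourhood of fixed size determined by $C_1$, not the locally uniform convergence on all of $\R^{n+1}$ claimed in the lemma.
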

\begin{proof}
By Lemmas \ref{lem:Hto0}, \ref{lem:Hlowerbound}, \ref{lem:girthestimate} and \ref{lem:Hupperbound}, there is a sequence $X_j\in M^n$ with $h_j:=h(X_j)\to\infty$, $H(X_j)\sim h_j^{-\frac{1}{2}}$ and $\proj_{\R^n\times\{0\}}X_j\sim h_j^{-\frac{1}{2}}$. Moreover, by Lemma \ref{lem:kappa1bound}, $\frac{\kappa_1}{H}(X_j)\to 0$. As in the proof of Lemma \ref{lem:kappa1bound}, we can use Corollary \ref{cor:convergence} and the cylindrical estimate to deduce that, after passing to a subsequence, $M^n_j:=h_j^{-\frac{1}{2}}(M^n-h_je_{n+1})$ converges locally uniformly in $C^2$ to a limit $M^n_\infty$ which is congruent to a round, orthogonal cylinder. Since the limit encloses the ray $\{se_{n+1}:s>0\}$, its axis must be parallel to $e_{n+1}$. It follows that $H\sim h^{-\frac{1}{2}}$. We can now conclude, by the same argument, that for any sequence $\lambda_j\to\infty$ and any $R>0$, the sequence
\ba\label{eq:convergetocylinder}
M_{j}^n=R\lambda_j^{-\frac{1}{2}}\lb M^n-\lambda_je_{n+1}\rb
\ea
converges subsequentially to a round orthogonal cylinder with axis parallel to $e_{n+1}$. Setting $\lambda_j:=R^2h_j$ where $R:=\sqrt{1-t}$, and applying standard regularity theory (see \cite{EcHu91} or \cite{Eck}), we deduce, after passing to a subsequence, that \eqref{eq:rescaledMCF} converges locally uniformly in $C^\infty$ to a shrinking cylinder with axis parallel to $e_{n+1}$. It is also clear from \eqref{eq:convergetocylinder} that the radius of the limit goes to zero as $t\to 1$. We conclude that the limit is $S^{n-1}_{r(t)}\times \R$. 
Since the limit is the same for any convergent subsequence, the convergence holds for the entire sequence.
\end{proof}

\begin{cor}[Asymptotics for $H$]
\[
H=\sqrt{\frac{n-1}{2}}h^{-\frac{1}{2}}+o\left(h^{-\frac{1}{2}}\right)\,.
\]
\end{cor}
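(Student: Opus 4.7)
The plan is to extract the asymptotic mean curvature directly from the preceding lemma. The cylinder $S^{n-1}_{\sqrt{2(n-1)}}\times \R$ has constant mean curvature $(n-1)/\sqrt{2(n-1)} = \sqrt{(n-1)/2}$, and the blow-down rescaling $X \mapsto h^{-1/2}(X-he_{n+1})$ scales mean curvature by a factor of $h^{1/2}$. Hence the corollary is equivalent to showing that $h(X)^{1/2}H(X) \to \sqrt{(n-1)/2}$ as $h(X)\to\infty$.

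Given a sequence $X_j \in M^n$ with $h_j := h(X_j)\to\infty$, the preceding lemma yields convergence of $M^n_j := h_j^{-1/2}(M^n - h_j e_{n+1})$ to $S^{n-1}_{\sqrt{2(n-1)}}\times \R$ locally in $C^\infty$. The point $X_j$ rescales to $Y_j := h_j^{-1/2}(X_j - h_j e_{n+1}) \in M^n_j \cap \{e_{n+1} = 0\}$, and the rescaling law gives $H_{M^n_j}(Y_j) = h_j^{1/2}H(X_j) =: L_j$. Provided $\{Y_j\}$ is bounded, I would extract a subsequence $Y_j \to Y_\infty$ on the waist $S^{n-1}_{\sqrt{2(n-1)}}\times \{0\}$; local $C^\infty$ convergence then forces $L_j \to H_{\mathrm{cyl}}(Y_\infty) = \sqrt{(n-1)/2}$, and uniqueness of the subsequential limit gives convergence of the full sequence.

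The remaining technical step is to verify boundedness of $\{Y_j\}$, equivalently an upper bound $\|\mathrm{proj}_{\R^n\times\{0\}}(X_j)\| = O(h_j^{1/2})$. For this I would perform a secondary rescaling centered at $X_j$: Lemma \ref{lem:kappa1bound} gives $\kappa_1/H \to 0$ at $X_j$, and repeating the argument from the proof of that lemma shows that $\tilde M^n_j := H(X_j)(M^n - X_j)$ subsequentially converges in $C^2_{\mathrm{loc}}$ to a round cylinder of radius $n-1$ with axis parallel to $e_{n+1}$ passing through the origin. The identity $\tilde M^n_j = L_j M^n_j - L_j Y_j$ links the two rescalings, so matching their limit cylinders forces $L_\infty\sqrt{2(n-1)} = n-1$ (equating radii) together with $L_\infty Y_\infty = (n-1)\nu_\infty$ (equating axes, where $\nu_\infty := \lim \nu(X_j)$). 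This simultaneously confirms $L_\infty = \sqrt{(n-1)/2}$ and bounds $\|Y_j\|$ (since $L_j \geq (4C_1)^{-1/2}$ by Lemma \ref{lem:Hlowerbound}). The main obstacle will be rigorously carrying out the axis-matching in the subsequential limits, which I expect to handle by passing to the limit of the explicit expression for the local axis in terms of $X_j$ and $\nu(X_j)$.
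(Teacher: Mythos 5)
Your reduction is the right one, and it is essentially the paper's (unwritten) proof: rescale by $h_j^{-1/2}$, invoke the preceding lemma at $t=0$, note that the cylinder $S^{n-1}_{\sqrt{2(n-1)}}\times\R$ has $H=\sqrt{(n-1)/2}$, and conclude $h^{1/2}H\to\sqrt{(n-1)/2}$ provided the rescaled points $Y_j=h_j^{-1/2}(X_j-h_je_{n+1})$ stay bounded. The gap is in your mechanism for that last step: the convergence of $M^n_j$ to the cylinder is only \emph{locally} uniform, so the identity $\tilde M^n_j=L_j(M^n_j-Y_j)$ yields no comparison of limits unless $Y_j$ is already known to lie in a region where that convergence is in force. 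Equivalently, the secondary rescaling $H(X_j)(M^n-X_j)$ only identifies the picture near $X_j$, at scale $H(X_j)^{-1}$, as a vertical round cylinder of radius $n-1$; a priori its axis could be any vertical line, possibly far (relative to $\sqrt{h_j}$) from the $e_{n+1}$-axis. Thus ``equating axes and radii'' of the two subsequential limits is exactly the statement $\|Y_j\|=O(1)$ that you set out to prove, not a consequence of the two convergences, and the proposed axis-matching is circular. (Note also that if $Y_j\to\infty$ there is no limit point $Y_\infty$ with which to match.)

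The missing ingredient is global and comes from convexity rather than from a second blow-up. $M^n$ bounds a convex body containing the ray $\{se_{n+1}:s\geq 0\}$, so the rescaled slice $h_j^{-1/2}\big(M^n\cap\{h=h_j\}-h_je_{n+1}\big)$ is the boundary of a compact convex set in $\R^n\times\{0\}$ whose interior contains the origin (indeed, by the lemma it contains a ball of radius $\sqrt{2(n-1)}-\epsilon$, being the convex hull of boundary points close to the sphere). Locally uniform convergence to the cylinder says that inside a fixed ball $B_R$ this boundary is Hausdorff-close to $S^{n-1}_{\sqrt{2(n-1)}}$; continuity of the radial function of a convex body with interior origin then forces the \emph{entire} slice to lie within distance $\sqrt{2(n-1)}+o(1)$ of the axis, since a boundary point outside $B_R$ together with directions of radial value $\approx\sqrt{2(n-1)}$ would produce a boundary point at an intermediate radius inside $B_R$, contradicting the convergence. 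This gives $\Vert\proj_{\R^n\times\{0\}}X\Vert\leq\big(\sqrt{2(n-1)}+o(1)\big)\sqrt{h(X)}$, hence boundedness of $Y_j$, after which your second paragraph (subsequential limit on the waist, $C^2$ convergence of curvatures, uniqueness of the limit value) completes the proof; this convexity step is also what lies behind the assertion ``$H\sim h^{-1/2}$'' in the proof of the preceding lemma, and with it the secondary rescaling and Lemma \ref{lem:Hlowerbound} are not needed here.
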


The remainder of the proof of Theorem \ref{thm:bowl} now follows from the work of Haslhofer \cite[Sections 3--5]{Ha}.

Finally, we show, briefly, how Corollary \ref{cor:bowl} follows from Theorem \ref{thm:bowl}.
\begin{proof}[Proof of Corollary \ref{cor:bowl}]
The cylindrical estimate follows immediately from the cylindrical estimate of Huisken and Sinestrari \cite[Theorem 1.5]{HuSi09},
\[
|A|^2-\frac{1}{n-1}H^2\leq \varepsilon H^2+C_\varepsilon\,,
\]
as the lower order term is annihilated under the rescaling no matter how small we take $\varepsilon$ (see \cite[Section 4]{HuSi99a}). The strong maximum principle then gives the strict inequality.

The gradient estimate follows from the gradient estimate of Huisken and Sinestrari \cite[Theorem 6.1 and Remark 6.2]{HuSi09},
\[
|\cd A|^2\leq Cg_1g_2\,,
\]
where $g_1:=2C_\varepsilon+\varepsilon H^2-\left(|A|^2-\frac{1}{n-1}H^2\right)$ for arbitrary $\varepsilon>0$ and $g_2=C_\delta+\delta H^2-\left(|A|^2-\frac{1}{n-1}H^2\right)$ with $\delta=\delta(n)$ fixed. In particular, $g_1\leq c_nH^2+C_n$, so that
\[
\frac{|\cd A|^2}{H^4}\leq C\lb c_n+\frac{C_n}{H^2}\rb\lb\frac{2C_\varepsilon}{H^2}+\varepsilon -\frac{|A|^2-\frac{1}{n-1}H^2}{H^2}\rb\,.
\]
Under the rescaling, all lower order terms are annihilated, and the claim follows, as for the cylindrical estimate, by taking $\varepsilon\to 0$.
\end{proof}

\section{Flows by non-linear functions of curvature}\label{sec:F}

We now consider solutions of \eqref{eq:FT} and prove Theorem \ref{thm:bowlF} and Corollary~\ref{cor:bowlF}. Let us begin with a discussion of the conditions (1)--(2) of Theorem~\ref{thm:bowlF} which will replace the corresponding conditions in Theorem \ref{thm:bowl}. 

\subsection{Flows by convex speeds} For speeds $F=f(\vec\kappa)$ given by convex admissible $f:\Gamma^n\to\R$, the cylindrical estimate takes the form
\ba\label{eq:cylindricalFconvex}
\kappa_1+\kappa_2-\beta_1^{-1}F>0\,,
\ea
where $\beta_1=f(0,1,\dots,1)$ is the value $F$ takes on the cylinder $\R\times S^{n-1}_1$. We claim that $\kappa_1$ is bounded from below by $\kappa_1+\kappa_2-\beta_1^{-1}F$, and that the only points at which both $\kappa_1$ and $\kappa_1+\kappa_2-\beta_1^{-1}F$ vanish are the cylindrical points: $\kappa_1=0$, $\kappa_2=\kappa_n$.
\begin{claim}\label{claim:cylindricalconvexF}
Set 
\[
\Lambda:=\left\{z\in\Gamma^n:\min_{1\leq i<j\leq n}\left(z_i+z_j\right)-\beta_1^{-1}f(z)>0\right\}\,.
\]
Then
\ben
\item $\Lambda\subset \Gamma_+$, and
\item $\pd\Lambda\cap \pd\Gamma_+=\cup_{\sigma\in P_n}\{k(\lambda_{\sigma(1)},\lambda_{\sigma(2)},\dots,\lambda_{\sigma(n)}):k\geq 0\}$, where $\lambda_1=0$ and $\lambda_2=\dots=\lambda_n=1$ and $P_n$ denotes the set of permutations of the set $\{1,\dots,n\}$.
\een
In particular, there is a constant $\beta_2>0$ such that
\[
\min_{1\leq i<j\leq n}\left(z_i+z_j\right)-\beta_1^{-1}f(z)\leq \beta_2\min_{1\leq i\leq n}z_i\,.
\]
\end{claim}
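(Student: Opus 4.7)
The plan is to derive one key inequality, from convexity of $f$ plus the ordering of the $z_i$'s, that yields all three statements of the claim.

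First I would evaluate $\nabla f$ at the cylindrical point $p:=(0,1,\dots,1)$. By permutation symmetry of $f$ in its last $n-1$ arguments, $\partial_j f(p)$ has a common value $\beta$ for $j\geq 2$, and Euler's identity for the $1$-homogeneous $f$ at $p$ gives $(n-1)\beta=f(p)=\beta_1$, so $\beta=\beta_1/(n-1)$. Setting $\alpha:=\partial_1 f(p)$, I would then show $\alpha\leq\beta_1/(n-1)$ by applying convexity along the segment between $p$ and $(1,\dots,1)$ in both directions: convexity from $(1,\dots,1)$ to $p$, together with $\partial_i f(1,\dots,1)\equiv\gamma:=f(1,\dots,1)/n$ (by symmetry and Euler), yields $\gamma\leq\beta_1/(n-1)$, and convexity from $p$ to $(1,\dots,1)$ yields $\alpha\leq n\gamma-\beta_1$. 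Hence $\beta_1-\alpha>0$ whenever $n\geq 3$; this is the one strict inequality on which everything else rests.

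Next, given any $z\in\Gamma^n$, I would relabel so that $z_1\leq\cdots\leq z_n$, making $\min_i z_i=z_1$ and $\min_{i<j}(z_i+z_j)=z_1+z_2$. The tangent-hyperplane inequality for convex $f$ at $p$, simplified using Euler ($f(p)=\nabla f(p)\cdot p$), reads
\[
f(z)\geq \nabla f(p)\cdot z=\alpha z_1+\beta\sum_{j=2}^n z_j.
\]
Combining with the elementary ordering bound $\sum_{j=3}^n z_j\geq(n-2)z_2$, the $z_2$-contributions cancel exactly and I arrive at
\[
\beta_1(z_1+z_2)-f(z)\leq(\beta_1-\alpha)z_1.
\]
Dividing by $\beta_1$ yields the final inequality of the claim with $\beta_2:=1-\alpha/\beta_1>0$; conclusion (1) then follows at once, since $\min_i z_i\leq 0$ would make the right-hand side nonpositive, contradicting $z\in\Lambda$.

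For (2), suppose $z\in\partial\Lambda\cap\partial\Gamma_+$. After reordering, $z_1=0$ and the left-hand side of the key inequality vanishes, so equality must hold throughout the chain; in particular the ordering step $\sum_{j\geq 3}z_j\geq(n-2)z_2$ is saturated, forcing $z_3=\cdots=z_n=z_2$, so $z=c\cdot(0,1,\dots,1)$ for some $c\geq 0$. Permuting back recovers the full union over $P_n$. For the reverse inclusion I would verify that $(\varepsilon,1,\dots,1)\in\Lambda$ for small $\varepsilon>0$: a first-order Taylor expansion gives $\min_{i<j}(z_i+z_j)-\beta_1^{-1}f(z)=(1-\alpha/\beta_1)\varepsilon+o(\varepsilon)>0$, using $\alpha<\beta_1$, so $(0,1,\dots,1)\in\partial\Lambda$, and the same holds for its non-negative multiples and permutations. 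The main obstacle in the argument is the derivation of the strict bound $\alpha<\beta_1$, which is precisely where the hypothesis $n\geq 3$ enters; apart from this, everything is a direct combination of the tangent-hyperplane inequality, Euler's identity, and ordering of the eigenvalues.
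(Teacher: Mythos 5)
Your proof is correct, and it takes a genuinely different route from the paper's. The paper argues qualitatively: $\Lambda$ is convex as a super-level set of the concave function $\min_{i<j}(z_i+z_j)-\beta_1^{-1}f$, symmetry and convexity then place $(1,\dots,1)$ inside $\Lambda$, homogeneity and strict monotonicity show that the only points of $\overline\Lambda$ with exactly one vanishing entry are the cylindrical ones, and the constant $\beta_2$ is produced non-constructively from compactness of $\Lambda\cap\{\Vert z\Vert=1\}$ and homogeneity. You instead linearize at the cylindrical point $p=(0,1,\dots,1)$: the supporting-hyperplane inequality $f(z)\geq \nabla f(p)\cdot z$ (Euler's relation makes the tangent plane pass through the origin), together with Euler's identity and the eigenvalue ordering, yields the single pointwise bound $\min_{i<j}(z_i+z_j)-\beta_1^{-1}f(z)\leq\bigl(1-\alpha/\beta_1\bigr)\min_i z_i$ with the explicit constant $\beta_2=1-\partial_1 f(p)/\beta_1\geq\tfrac{n-2}{n-1}$, and (1), (2) and the final estimate all follow from it, including the equality analysis for (2) and the check that the cylindrical rays genuinely lie on $\partial\Lambda$. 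Your route buys an explicit constant valid on all of $\Gamma^n$ (no compactness argument, which in the paper is slightly delicate near the rays where both sides vanish) and makes transparent exactly where $n\geq 3$ and $\beta_1>0$ enter; it is the nonlinear analogue of the computation \eqref{eq:roundcrosssection} for $F=H$. The paper's route never differentiates $f$, relying only on convexity, symmetry, monotonicity and homogeneity as set-level properties. Two caveats you share with the paper, at the same level of implicitness: both arguments use convexity of $\Gamma^n$ (you for segments emanating from $p$ in the supporting-hyperplane inequality, the paper for convexity of $\Lambda$), and both treat only boundary points with exactly one vanishing entry; points of $\partial\Lambda\cap\partial\Gamma_+$ with two or more vanishing entries lie outside $\Gamma^n_2\supset\Gamma^n$, but they are excluded by passing to the limit in your key inequality along a sequence in $\Lambda$, so this is harmless.
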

\begin{proof}
Note that, as a super-level set of a concave function, $\Lambda$ is convex. Note also that $(0,1,\dots,1)\in\pd\Lambda$. Thus, by symmetry and convexity, we have $(1,\dots,1)\in\Lambda$. Finally, by homogeneity and \emph{strict} monotonicity of $f$, the only points in $\overline\Lambda$ of the form $(0,z_2,\dots,z_n)$ for $0<z_i$ are those with $z_2=\dots=z_n$. Claims (1) and (2) follow. The existence of $\beta_2$ then follows from compactness of the set $\Lambda\cap\{\Vert z\Vert=1\}$ and homogeneity of $f$.
\end{proof}

The gradient estimate then takes the form
\ba\label{eq:gradientFconvex}
\frac{|\cd A|^2}{F^4}
\leq C_1\frac{\kappa_1}{F}\,.
\ea

We remark that \eqref{eq:cylindricalFconvex} holds on blow-up limits of two-convex flows by convex admissible speeds \cite{AnLa14}; however, it is unknown (to the authors) whether a gradient estimate of the form \eqref{eq:gradientFconvex} holds, except when $F$ is the mean curvature. We note that, by a similar argument as in Claim \ref{claim:gradient} below, the estimate
\bann
\frac{|\cd A|^2}{F^4}\leq C_1
\eann
would suffice.

\subsection{Flows by concave speeds} For speeds $F=f(\vec\kappa)$ given by concave admissible $f:\Gamma^n\to\R$, the cylindrical estimate takes the form
\ba\label{eq:cylindricalFconcave1}
\kappa_n-\beta_1^{-1}F<0\,,
\ea
where $\beta_1=f(0,1,\dots,1)$ is the value $F$ takes on the cylinder $\R\times S^{n-1}_1$. We claim that $\kappa_1$ is bounded from below by $\beta_1^{-1}F-\kappa_n$, and that the only points at which both $\kappa_1$ and $\kappa_n-\beta_1^{-1}F$ vanish are the cylindrical points (cf. \cite[Proposition 3.6 and Theorem 3.8]{BrHu2}).
\begin{claim}\label{claim:cylindricalconcaveF}
Set 
\[
\Lambda:=\{z\in\Gamma^n:\max_{1\leq i\leq n} z_i-\beta_1^{-1}f(z)<0\}\,.
\]
Then
\ben
\item $\Lambda\subset \Gamma_+$, and
\item $\pd\Lambda\cap \pd\Gamma_+=\cup_{\sigma\in P_n}\{k(\lambda_{\sigma(1)},\lambda_{\sigma(2)},\dots,\lambda_{\sigma(n)}):k\geq 0\}$, where $\lambda_1=0$ and $\lambda_2=\dots=\lambda_n=1$ and $P_n$ denotes the set of permutations of the set $\{1,\dots,n\}$.
\een
In particular, there is a constant $\beta_2>0$ such that
\[
\beta_1^{-1}f(z)-\max_{1\leq i\leq n}z_i\leq \beta_2\min_{1\leq i\leq n}z_i\,.
\]
\end{claim}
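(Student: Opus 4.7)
I would mirror the proof of Claim \ref{claim:cylindricalconvexF}, with the roles of $\min$/convex and $\max$/concave interchanged. Since $f$ is concave and $\max_i z_i$ is convex, $\Lambda$ is a sublevel set of a convex function, hence convex; symmetry and $1$-homogeneity of $f$ then make it a symmetric cone. A direct check gives $(0, 1, \dots, 1) \in \partial\Lambda$, and averaging $f$ over the $n$ permutations of $(0, 1, \dots, 1)$ via concavity and $1$-homogeneity yields $f(1, \dots, 1) \geq \tfrac{n}{n-1}\beta_1$, so that $(1, \dots, 1) \in \Lambda$.

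For claim (1), I would argue by contradiction. Given $z \in \Lambda$ with, by symmetry, $z_1 = \min_i z_i$, suppose $z_1 \leq 0$. The inclusion $\Gamma^n \subset \Gamma_2^n$ forces $z_j > -z_1 \geq 0$ for all $j \geq 2$, and after a $1$-homogeneous rescaling making $z_n = \max_i z_i = 1$, each $z_j \leq 1$. Monotonicity of $f$ then chains
\[
f(z) \;\leq\; f(0, z_2, \dots, z_{n-1}, 1) \;\leq\; f(0, 1, \dots, 1) \;=\; \beta_1,
\]
contradicting the defining inequality $f(z) > \beta_1 \max_i z_i = \beta_1$ of $\Lambda$.

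For claim (2), any $z \in \partial\Lambda \cap \partial\Gamma_+$ has a zero coordinate; by symmetry, take $z_1 = 0$, whence $\Gamma_2^n$ forces $z_j > 0$ for $j \geq 2$. Normalizing $z_n = 1$, the defining equation on $\partial\Lambda$ becomes $f(z) = \beta_1 = f(0, 1, \dots, 1)$. Strict monotonicity of $f$ in each coordinate then forces $z_j = 1$ for every $j \in \{2, \dots, n-1\}$, identifying $z$ as a cylindrical ray.

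The existence of $\beta_2$ should follow from a compactness argument on $\overline\Lambda \cap S^n$. The ratio $\psi(z) := (\beta_1^{-1}f(z) - \max_i z_i)/\min_i z_i$ is continuous where $\min_i z_i > 0$, and by (2) the only points of $\overline\Lambda \cap S^n$ where $\min_i z_i = 0$ are cylindrical rays. I expect this last step to be the main technical point: near a cylindrical ray the ratio is an indeterminate form, so I would restrict to a neighbourhood on which $\max_i z_i$ is attained by a unique coordinate (so the numerator is smooth) and use a first-order Taylor expansion of $f$ at $(0, 1, \dots, 1)$ to match the linear vanishing of numerator and denominator. Compactness of $\overline\Lambda \cap S^n$ then yields the uniform $\beta_2$.
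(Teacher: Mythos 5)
Your plan follows essentially the same route as the paper, which simply reuses the proof of Claim \ref{claim:cylindricalconvexF}: convexity of $\Lambda$ (there as a super-level set of a concave function, here as a sub-level set of a convex one), the observation that $(0,1,\dots,1)\in\pd\Lambda$ and $(1,\dots,1)\in\Lambda$, the identification of the points of $\overline\Lambda$ on the face $\{z_1=0\}$ via \emph{strict} monotonicity, and then compactness of $\overline\Lambda\cap\{\Vert z\Vert=1\}$ together with homogeneity for $\beta_2$. Your substitute for part (1) --- rescale so $\max_iz_i=1$ and chain $f(z)\leq f(0,z_2,\dots,z_{n-1},1)\leq f(0,1,\dots,1)=\beta_1$ --- is a direct monotonicity argument in place of the paper's (implicit) use of convexity of $\Lambda$ together with the face characterization; both work at the paper's level of rigour (each tacitly uses that the relevant segments stay in $\Gamma^n$), and your averaging argument giving $f(1,\dots,1)\geq\tfrac{n}{n-1}\beta_1$ is a clean way to see $(1,\dots,1)\in\Lambda$ in the concave case.

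One concrete repair is needed in your sketch of the $\beta_2$ step. Near a cylindrical ray the maximum is attained, in the limit, by \emph{all} of $z_2,\dots,z_n$, so there is no neighbourhood of $(0,1,\dots,1)$ on which $\max_iz_i$ is attained by a unique coordinate, and the proposed smooth Taylor expansion of the numerator is not available. This is easily fixed without smoothness: for $z$ near the cylindrical ray with $z_1=\min_iz_i$, use $\max_iz_i\geq\max_{i\geq2}z_i$ and the face inequality $f(0,z_2,\dots,z_n)\leq\beta_1\max_{i\geq2}z_i$ (which is exactly your monotonicity argument on the face) to get
\[
\beta_1^{-1}f(z)-\max_iz_i\;\leq\;\beta_1^{-1}\bigl(f(z)-f(0,z_2,\dots,z_n)\bigr)\;\leq\;\beta_1^{-1}\Bigl(\sup\tfrac{\pd f}{\pd z_1}\Bigr)\,z_1\,,
\]
the supremum being taken over a compact neighbourhood of the cylindrical point on the unit sphere, where it is finite because $f$ is smooth on $\Gamma^n\supset\{(0,\hat z):\hat z\in\Gamma_+^{n-1}\}$; combined with compactness away from the cylindrical rays and homogeneity, this yields $\beta_2$. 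A second, minor point: in part (2) a point of $\pd\Lambda\cap\pd\Gamma_+$ only satisfies $z_i+z_j\geq0$, so "$\Gamma_2^n$ forces $z_j>0$" needs a word to exclude points with two vanishing coordinates (strict monotonicity gives $f(0,0,1,\dots,1)<\beta_1$, so such points cannot be limits of $\Lambda$); the paper glosses over this as well, and you should also note that the cylindrical rays do lie in $\pd\Lambda$, e.g.\ because $(\epsilon,1,\dots,1)\in\Lambda$ by strict monotonicity.
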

\begin{proof}
The proof is the same as the proof of Claim \ref{claim:cylindricalconvexF}.
\end{proof}

The gradient estimate then takes the form
\ba\label{eq:gradientFconcave1}
\frac{|\cd A|^2}{F^4}\leq C_1\frac{\kappa_1}{F}
\ea

We remark that \eqref{eq:cylindricalFconcave1} holds on blow-up limits of two-convex flows by concave admissible speeds \cite{LaLy} (cf. \cite[Theorem 3.1]{BrHu2}). Moreover, making use of \cite[Theorem 6.1]{BrHu2}, we find that the gradient estimate also holds if the underlying flow is embedded.

\begin{claim}\label{claim:gradient}
Let $X:M^n\times[0,T)\to\R^{n+1}$ be an embedded solution of \eqref{eq:F}, where $F$ is given by $F=f(\vec\kappa)$ for some admissible $f:\Gamma^n\to\R$ satisfying the conditions of Corollary \ref{cor:bowlF}. Then there is a constant $C=C(n,M_0)$ and, for any $\varepsilon>0$, a constant $F_\varepsilon=F(\varepsilon,n,M_0)$ such that
\[
\frac{|\cd A|^2}{F^4}\leq \varepsilon+C\frac{\kappa_1}{F}
\]
wherever $F>F_\varepsilon$. 
\end{claim}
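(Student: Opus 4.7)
The plan is to argue by contradiction via a parabolic rescaling, using the (non-improved) noncollapsing gradient estimate of Brendle--Huisken \cite[Theorem 6.1]{BrHu2} as the regularity input. Suppose the claim fails; then there are $\varepsilon_0>0$ and a sequence of points $(p_j,t_j)\in M^n\times[0,T)$ with $F_j:=F(p_j,t_j)\to\infty$ such that
\[
\frac{|\cd A|^2}{F^4}(p_j,t_j)>\varepsilon_0+j\cdot\frac{\kappa_1}{F}(p_j,t_j)\,.
\]
Since \cite[Theorem 6.1]{BrHu2} supplies a uniform pointwise bound $|\cd A|/F^2\leq C(n,M_0)$ for embedded solutions in our class (in the spirit of \cite[Corollary 2.7]{HK1}), the left-hand side is uniformly bounded, which forces $\kappa_1/F(p_j,t_j)\to 0$.

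Next I would pass to the parabolically rescaled flows
\[
\wt M_{s,j}:=F_j\bigl(M_{t_j+F_j^{-2}s}-X(p_j,t_j)\bigr),
\]
which, by $1$-homogeneity of $F$, are again embedded solutions of \eqref{eq:F}. At the marked point $0\in\wt M_{0,j}$ the rescaled speed satisfies $\wt F_j(0)=1$, while $\wt\kappa_1/\wt F(0)\to 0$ and $|\cd\wt A|^2/\wt F^4(0)\geq \varepsilon_0$. The cylindrical estimate together with Claim \ref{claim:cylindricalconcaveF} bounds $|A|/F$ uniformly, and the noncollapsing gradient bound controls $|\cd A|/F^2$ uniformly; combined with standard interior parabolic regularity for \eqref{eq:F}, this yields, after passing to a subsequence, a smooth ancient limit $\wt M^n_{s,\infty}$ of \eqref{eq:F} with $\wt F_\infty(0,0)=1$ and $\wt\kappa^\infty_1(0,0)=0$.

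By Claim \ref{claim:cylindricalconcaveF}, the vanishing of $\wt\kappa^\infty_1$ at the origin together with the cylindrical estimate forces $\wt\kappa^\infty_2=\cdots=\wt\kappa^\infty_n=\beta_1^{-1}\wt F_\infty$ there, so $\wt\kappa^\infty_1$ attains its minimum value $0$ at a point where $\wt M^n_\infty$ is weakly convex. The strong maximum principle for the Weingarten tensor (which holds for admissible concave speeds satisfying the hypotheses of Corollary \ref{cor:bowlF}) then gives $\wt\kappa^\infty_1\equiv 0$, so the splitting theorem yields a local isometric splitting $\wt M^n_\infty\cong\R\times\Sigma^{n-1}$; the cylindrical estimate forces $\Sigma^{n-1}$ to be a round sphere. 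On such a cylinder, however, $|\cd A|\equiv 0$, contradicting $|\cd\wt A|^2/\wt F^4(0,0)\geq\varepsilon_0$.

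The key technical hurdle is the extraction of the smooth ancient limit with $C^2$ convergence: this depends on the non-improved bound $|\cd A|/F^2\leq C$ for embedded flows of this class, which is precisely the content of the interior noncollapsing estimates of \cite[Theorem 6.1]{BrHu2}. Once the limit is in hand, the cylindrical-rigidity portion of the argument follows cleanly from Claim \ref{claim:cylindricalconcaveF}, the strong maximum principle for the Weingarten tensor, and the splitting theorem.
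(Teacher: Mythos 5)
Your argument is correct and follows essentially the same route as the paper's proof: contradiction sequence with $F\to\infty$, the Brendle--Huisken bound $|\cd A|^2\leq\Lambda F^4$ forcing $\kappa_1/F\to 0$, parabolic rescaling and compactness via \cite[Theorem 6.1]{BrHu2}, and then the cylindrical estimate, Claim \ref{claim:cylindricalconcaveF} and the splitting theorem to identify the limit with a piece of a round cylinder, contradicting $|\cd \wt A|^2/\wt F^4\geq\varepsilon_0/2$ at the basepoint. The only (harmless) deviation is that you claim a smooth \emph{ancient} limit, which is neither needed nor fully justified by the stated estimates; the paper, like the rest of your argument, only uses convergence on a uniform parabolic neighbourhood of the marked points.
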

\begin{proof}
We will make use of the gradient estimate of \cite[Theorem 6.1]{BrHu2}, which provides a constant $\Lambda=\Lambda(n,M_0)$ such that
\ba\label{eq:BrHugradient}
|\cd A|^2\leq\Lambda F^4\,.
\ea
We note that the interior non-collapsing estimate \cite{ALM13} and Sections 5 and 6 of \cite{BrHu2} apply to embedded flows satisfying the conditions of Corollary \ref{cor:bowlF}. 

So suppose that the claim does not hold. Then there is a constant $\varepsilon_0>0$ and a sequence of points $(x_j,t_j)\in M^n\times[0,T)$ with $F(x_j,t_j)\to\infty$ such that
\[
\frac{|\cd A|^2}{F^4}(x_j,t_j)>\varepsilon_0+j\frac{\kappa_1}{F}(x_j,t_j)\,.
\]
If 
\[
\limsup_{j\to\infty}\frac{\kappa_1}{F}(x_j,t_j)>0
\]
then we would obtain a contradiction to \eqref{eq:BrHugradient}. Otherwise, passing to a subsequence, translating in space and time, and rescaling by $\lambda_j:=F(x_j,t_j)$, we obtain a sequence of flows $X_j:M^n\times(-\lambda_j^2t_j,0]\to\R^{n+1}$ with
\bann
X_j(x_j,0)=0\,,\quad F_j(x_j,0)=1\,,\quad\kappa_1(x_j,0)\to 0\quad\text{and}\quad|\cd A|^2(x_j,0)>\frac{\varepsilon_0}{2}\,.
\eann
By \cite[Theorem 6.1]{BrHu2}, this sequence converges in a uniform parabolic neighbourhood of $(x_j,0)$ locally uniformly in $C^2$ to some non-empty smooth limit flow. By the cylindrical estimate \cite{LaLy} (cf. \cite[Theorem 3.1]{BrHu2}), this limit must satisfy $\kappa_n-\beta_1^{-1}F\leq 0$. By Claim \ref{claim:cylindricalconcaveF}, this implies $\kappa_1\geq 0$. Since $\kappa_1$ reaches zero at the origin, we can now conclude from the splitting theorem and Claim~\ref{claim:cylindricalconcaveF} that the limit is contained in a shrinking cylinder. But this contradicts the fact that $\frac{|\cd A|^2}{F^4}\ge \frac{\varepsilon_0}{2}$ at some point on the limit.
\end{proof}
It follows that blow-up limits of \eqref{eq:F} with speeds satisfying the conditions of Corollary \ref{cor:bowlF} satisfy
\[
\frac{|\cd A|^2}{F^4}\leq C\frac{\kappa_1}{F}\,.
\]

Note that flows by concave speeds are interior non-collapsing \cite{ALM13}. Moreover, the non-collapsing estimate improves at a singularity \cite{LaLy}. Thus, we can replace the cylindrical estimate by
\bann
\overline k-\beta_1^{-1}F<0\,.
\eann
This formulation of the cylindrical estimate is non-trivial in dimension $n=2$, but stronger than \eqref{eq:cylindricalFconcave1} when $n\geq 3$.

Armed with these facts, and the splitting theorem of the Appendix, we can proceed almost exactly as in Section \ref{sec:proof} to show (assuming, without loss of generality, that $f(0,1,\dots,1)=n-1$), that the blow-down of $M^n_t:=M^n+te_{n+1}$ is the shrinking cylinder $S^{n-1}_{\sqrt{2(n-1)(1-t)}}\times \R$. 

By the conditions on $F$, the remainder of the proof differs only slightly from \cite[Sections 3-5]{Ha}. Indeed, the linearization of \eqref{eq:F} is the equation
\ba\label{eq:LF}
(\pd_t-\Delta_F)u=|A|^2_Fu\,,
\ea
where, in an orthonormal frame of eigenvectors for $A$, $\Delta_F:=\frac{\pd f}{\pd \kappa_{i}}\cd_i\cd_i$ and $|A|_F:=\frac{\pd f}{\pd \kappa_{i}}\kappa_i^2$. Solutions of the linearized flow on a translating solution of \eqref{eq:F} correspond to solutions of the linearized translator equation
\ba\label{eq:LFT}
-\Delta_Fu=\cd_Vu+|A|^2_Fu
\ea
on the corresponding solution of \eqref{eq:FT}. Since the speed $F$ satisfies this equation, the strong maximum principle implies that
\bann
\sup_{h\leq h_0}\frac{\vert u\vert}{F}\leq \sup_{h=h_0}\frac{\vert u\vert}{F}
\eann
for any $u$ satisfying \eqref{eq:LFT} on a strictly convex solution of \eqref{eq:FT}.

By the invariance of \eqref{eq:F} under ambient isometries, the functions
\[
u_{J,O}(x,t):=\inner{J(X(x,t)-O)}{\nu(x,t)}
\]
satisfy \eqref{eq:LF} for any rotation generator $J\in \mathfrak{so}(n+1)$ and translation generator $O\in\R^{n+1}$. 

Recalling that we have normalized $f$ so that $f(0,1,\dots,1)=n-1$, observe that (modulo a time-dependent tangential reparametrization) the shrinking cylinders
\bann
C:S^{n-1}\times\R\times (-\infty,1){}&\to S^{n-1}_{r(t)}\times\R\subset \R^{n+1}\\
(\vartheta,h,t){}&\mapsto \lb r(t)\vartheta,h\rb
\eann
with $r(t):=\sqrt{2(n-1)(1-t)}$ satisfy \eqref{eq:F}. By symmetry and homogeneity of $f$, we find, for each $j=2,\dots,n$, that
\[
\frac{\pd f}{\pd\kappa_j}=\frac{r}{n-1}\sum_{i=2}^n\frac{\pd f}{\pd\kappa_i}\kappa_i=\frac{r}{n-1}\sum_{i=1}^n\frac{\pd f}{\pd\kappa_i}\kappa_i=\frac{r}{n-1}F=1
\]
on the shrinking cylinder, so that
\[
\Delta_F=\frac{\pd f}{\pd\kappa_1}\cd_h\cd_h+\frac{1}{r^2}\Delta_{S^n}
\]
and
\[
|A|^2_F=\frac{1}{2(1-t)}\,.
\]
It is now clear that the decay estimate \cite[Proposition 4.1]{Ha} and the contradiction argument in \cite[Section 5]{Ha} apply in the non-linear setting. This proves Theorem \ref{thm:bowlF}. Corollary \ref{cor:bowlF} then follows, since, by \cite{LaLy} (cf. \cite[Theorem 3.1]{BrHu2}) and Claim \ref{claim:gradient}, the assumptions of Theorem \ref{thm:bowlF} hold on blow-up limits of solutions of \eqref{eq:F}.

\section{Appendix: The splitting theorem}\label{sec:SMP}

We include here a proof of the splitting theorem for solutions of \eqref{eq:F}.

\begin{theorem}[Splitting Theorem]\label{thm:splittingF}
Let $X:M^n\times(0,t_0]\to\R^{n+1}$, $n\geq 2$, be a weakly convex solution of \eqref{eq:F}, where $F$ is given by $F(x)=f(\vec\kappa(x))$ for some admissible $f:\Gamma^n\to\R$ such that 
\bi
\item[(i)] $\displaystyle \{(0,\hat z):\hat z\in \Gamma^{n-1}_+\}\subset \Gamma^n$ and the function $f_\ast:\Gamma_+^{n-1}\to\R$ defined by
\bann
f_\ast(z_2^{-1},\dots,z_n^{-1}):=f(0,z_2,\dots,z_n)^{-1}
\eann
is concave. 
\ei
Suppose also that 
\bi
\item[(ii)] $\vec\kappa(M^n\times(0,t_0])\subset \overline \Gamma{}_0^n$ for some cone $\Gamma_0^n$ satisfying $\overline\Gamma{}_0^n\setminus\{0\}\subset \Gamma_2^n$, where $\Gamma_2^n:=\{z\in\R^n:\min_{1\leq i<j\leq n}\{z_i+z_j\}>0\}$. 
\ei
Then $\kappa_1(x_0,t_0)=0$ for some $x_0\in M^n$ only if $\kappa_1\equiv 0$ and $M^n$ splits isometrically as a product $M^n\cong \R\times\Sigma^{n-1}$.
\end{theorem}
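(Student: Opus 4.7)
The plan is to apply Hamilton's strong tensor maximum principle to the Weingarten tensor $A$, which is weakly positive semi-definite on $M^n \times (0,t_0]$ by hypothesis. Under \eqref{eq:F}, a standard computation gives an evolution equation of the form
\[
(\pd_t - \Delta_F)\,h_{ij} = \ddot f^{kl,pq}\,\cd_i h_{kl}\,\cd_j h_{pq} + \dot f^{kl} h_{km} h^m{}_l \, h_{ij} - F\, h_{ik} h^k{}_j\,.
\]
To invoke the tensor strong maximum principle, I need the right-hand side, contracted twice with any null eigenvector of $A$, to be nonnegative. In a frame diagonalising $A$ at a point where $\kappa_1 = 0$, the null eigenvector is $e_1$ and both $h_{11}$ and $h_{1k}$ vanish identically, so the two algebraic reaction terms vanish, and the null-eigenvector condition reduces to
\[
\ddot f^{kl,pq}\,\cd_1 h_{kl}\,\cd_1 h_{pq} \geq 0\,.
\]

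The main obstacle is verifying this inequality from hypothesis (i). In the diagonalising frame, the quadratic form above may be expressed in terms of the diagonal second derivatives $\pd^2 f / \pd z_i \pd z_j$ of $f$ together with the off-diagonal differences $(\dot f^i - \dot f^j)/(\kappa_i - \kappa_j)$; using the Codazzi identity $\cd_1 h_{1j} = \cd_j h_{11}$ to rewrite the derivatives carrying a $1$-index, this combined quadratic form becomes, on the face $\{z_1 = 0\}$ of $\Gamma^n$, exactly the object whose nonnegativity is equivalent to the concavity of $f_\ast$ on $\Gamma^{n-1}_+$. This is the classical inverse-concavity computation of Andrews, and hypothesis (i) is precisely what is required. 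The admissibility of $f$ (in particular monotonicity and $1$-homogeneity) ensures that every quantity in the decomposition is well-defined on the face $\{z_1=0\}$.

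With the null-eigenvector condition established, Hamilton's strong tensor maximum principle yields that $\ker A$ is a parallel distribution on $M^n$ with locally constant rank, and that $\kappa_1 \equiv 0$ throughout. Hypothesis (ii), $\ov\Gamma^n_0 \setminus \{0\} \subset \Gamma^n_2$, forces $\kappa_1 + \kappa_2 > 0$ wherever $A$ does not vanish identically; combined with $\kappa_1 \equiv 0$, this pins down $\dim \ker A \equiv 1$ (the degenerate case $A \equiv 0$ on a component trivially gives a hyperplane). A standard application of de Rham's decomposition theorem to the parallel rank-$1$ distribution then produces the global isometric splitting $M^n \cong \R \times \Sigma^{n-1}$.
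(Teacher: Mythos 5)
There is a genuine gap at the heart of your argument: the null--eigenvector condition you reduce to, namely
\[
\frac{\pd^2 F}{\pd A_{kl}\pd A_{pq}}\,\cd_1 A_{kl}\,\cd_1 A_{pq}\;\geq\;0\,,
\]
is not implied by concavity of $f_\ast$ and in general \emph{fails} for the speeds covered by hypothesis (i) (e.g.\ the two-harmonic mean), since for concave $f$ the form $\ddot F$ is non-positive. Your claim that, after writing $\ddot F$ in terms of $\pd^2 f/\pd z_i\pd z_j$ and the differences $(\dot f^i-\dot f^j)/(\kappa_i-\kappa_j)$ and using Codazzi, one arrives at ``exactly the object whose nonnegativity is equivalent to the concavity of $f_\ast$'' is not correct: that decomposition is just $\ddot F$ itself, whereas inverse-concavity on the face $\{z_1=0\}$ controls the \emph{different} quantity $Q:=\ddot F+\dot F\ast R$, with $R=(A-\kappa_1 I)\big|_{e_1^\perp}^{-1}\circ\proj_{e_1^\perp}$ (see the inequality \eqref{eq:ICcond}, $Q\geq 2\,DF\otimes DF/F$). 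The extra terms $\dot F^{pr}R^{qs}$, which carry the structure $\dot f^p/\kappa_q$ and make the form non-negative, are not present in $\ddot F$ at all; they must be generated by the argument --- in the paper they arise from differentiating the null eigendirection $e_1$ (equivalently, in a tensor-maximum-principle formulation, from the supremum over gradient adjustments in Andrews' refined null-vector condition). Without them your key inequality is simply false, and with them the inequality you need is no longer ``nonnegativity of $\ddot F$'' but the inverse-concavity estimate above.

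A related problem is your appeal to Hamilton's strong tensor maximum principle: the reaction term in \eqref{eq:FevolveA} depends on $\cd A$, so Hamilton's theorem (whose reaction term is a function of the tensor alone) does not apply as stated, and the strong maximum principle/parallel-kernel conclusion in the gradient-dependent setting is exactly the delicate point. The paper avoids this by exploiting hypothesis (ii): since $\kappa_1+\kappa_2>0$, one has $\kappa_1<\kappa_2$ near a zero of $\kappa_1$, so $\kappa_1$ and $e_1$ are smooth there; one then derives the scalar inequality $(\pd_t-\Delta_F)\kappa_1+B^k\cd_k\kappa_1\geq -C\kappa_1$ (using \eqref{eq:ICcond}) and applies the \emph{scalar} strong maximum principle, after which the equality case of \eqref{eq:ICcond} gives $\dot F(\cd_1 A)\equiv 0$ and hence, by monotonicity, $\cd_1A\equiv 0$; the splitting (parallel kernel in space \emph{and} time, constancy of $X_\ast\ker A$ in $\R^{n+1}$) is then deduced directly rather than from a tensor maximum principle. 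Your final de Rham step is fine once $\ker A$ is parallel and one-dimensional, but as written the parallelism is obtained from machinery whose hypotheses you have not verified; this is the step that needs the missing terms and the correct (Andrews-type or scalar-$\kappa_1$) formulation.
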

\begin{proof}
This was established for convex speeds in \cite[Theorem 4.21]{La}. The proof for speeds satisfying the weaker inverse-concavity condition is similar:

Suppose that $\kappa_1$ reaches zero at an interior space-time point $(x_0,t_0)$. By hypothesis, $\kappa_1<\kappa_2$ at this point. Let $U$ be the largest space-time neighbourhood of $(x_0,t_0)$ in $M^n\times(0,t_0]$ such that $\kappa_1<\kappa_2$. Then $U$ is open, $\kappa_1$ has a unique principal direction field $e_1$ in $U$, and both are smooth in $U$. 

Differentiating $\kappa_1=A(e_1,e_1)$ yields
\bann
\cd_k\kappa_1=\cd_kA_{11}+2A(\cd_ke_1,e_1)\,,
\eann
so that
\ba\label{eq:Dk1}
\cd_kA_{11}=\cd_k\kappa_1=0
\ea
at $(x_0,t_0)$ for each $k$. Note that $\cd_ke_1\perp e_1$ since $e_1$ has constant length. Differentiating the eigenvalue identity $A(e_1)=\kappa_1e_1$ yields the remaining components:
\bann
(A-\kappa_1I)(\cd_ke_1)=\lb\cd_k\kappa_1 I-\cd_kA\rb(e_1)\,,
\eann
so that
\ba\label{eq:De1}
\cd_ke_1=-R\lb\cd_kA(e_1)\rb\,,
\ea
where $R:=(A-\kappa_1I)|_{e_1^\perp}^{-1}\circ \proj_{e_1^\perp}$. Next, consider the time derivative
\bann
\pd_t\kappa_1=\cd_tA_{11}+2A(\cd_te_1,e_1)\,,
\eann
where the covariant time derivative $\cd_t$ is defined on vector fields $v$ via $\cd_tv=[\pd_t,v]-HA(v)$, and extended to tensor fields by the Leibniz rule. This yields
\bann
\pd_t\kappa_1=\cd_tA_{11}
\eann
at $(x_0,t_0)$. Finally, we compute the Hessian,
\bann
\cd_k\cd_l\kappa_1=\cd_k\cd_lA_{11}+4\cd_kA(\cd_le_1,e_1)+2A(\cd_k\cd_le_1,e_1)+2A(\cd_ke_1,\cd_le_1)\,.
\eann
Applying \eqref{eq:De1} and the Codazzi identity, we obtain
\bann
\cd_k\cd_l\kappa_1=\cd_k\cd_lA_{11}-2R(\cd_1A(e_k),\cd_1A(e_l))
\eann
at $(x_0,t_0)$.

In an orthonormal frame of eigenvectors of $A$, we have the evolution equation \cite{An94a}
\ba\label{eq:FevolveA}
(\cd_t-\eL)A_{ij}=|A|_F^2A_{ij}+\frac{\pd^2 F}{\pd A_{pq}\pd A_{rs}}\cd_iA_{pq}\cd_jA_{rs}\,,
\ea
where $\eL:=\frac{\pd F}{\pd A_{kl}}\cd_k\cd_l$ and $|A|^2_F:=\frac{\pd F}{\pd A_{kl}}A^2_{kl}$, and we conclude
\bann
(\pd_t-\eL)\kappa_1=|A|_F^2\kappa_1+N(A,\cd A)\,,
\eann
where
\bann
N(A,\cd A):={}&2A\big((\cd_t-\Delta)e_1,e_1\big)+\frac{\pd^2F}{\pd A_{pq}\pd A_{rs}}\cd_1A_{pq}\cd_1A_{rs}\\
{}&+2\frac{\pd F}{\pd A_{kl}}\Big[2R(\cd_1A_k,\cd_1A_l)-A\big(R(\cd_1A_k),R(\cd_1A_l)\big)\Big]\,.
\eann
Observe that, at any boundary point $Z\in \mathrm{Sym}_{\Gamma^n\cap\pd\Gamma^n_+}$, the space of symmetric $n\times n$ matrices with eigenvalues $z$ in $\Gamma^n\cap\pd\Gamma^n_+$, we have, for any totally symmetric $T\in \R^n\otimes\R^n\otimes\R^n$,
\bann
N(Z,T)={}&B^p(Z,T)T_{p11}+\sum_{p,q,r,s>1}Q^{pq,rs}(Z)T_{1pq}T_{1rs}\,,
\eann
where
\bann
B^1(Z,T):={}&\left.\lb\frac{\pd^2 F}{\pd A_{11}\pd A_{11}}T_{111}+2\sum_{p,q>1}\frac{\pd^2 F}{\pd A_{pq}\pd A_{11}}T_{1pq}\rb\right|_{Z}\,,\\
B^p(Z,T):={}&R^{pq}\left.\lb\frac{\pd F}{\pd A_{11}}T_{11q}+2\sum_{k>1}\frac{\pd F}{\pd A_{1k}}T_{k1q}\rb\right|_{Z} \quad\text{for}\quad p>1
\eann
and
\bann
Q^{pq,rs}(Z):=\left.\lb\frac{\pd^2 F}{\pd A_{pq}\pd A_{rs}}+\frac{\pd F}{\pd A_{pr}}R^{qs}\rb\right|_{Z}\,.
\eann
We claim that, as quadratic forms on the space of $(n-1)\times (n-1)$ symmetric matrices,
\ba\label{eq:ICcond}
Q\geq 2\frac{DF\otimes DF}{F}
\ea
at any $Z\in \mathrm{Sym}_{\Gamma_+^n\cap\pd\Gamma^n_+}$. Indeed, embedding the space $\mathrm{Sym}_{\Gamma_+^{n-1}}$ of positive definite $(n-1)\times(n-1)$ symmetric matrices into the space $\mathrm{Sym}_{\overline \Gamma{}^n_+}$ of non-negative definite $n\times n$ symmetric matrices via the natural inclusion, the inverse-concavity condition is equivalent to concavity of the function $F_\ast:\mathrm{Sym}_{\Gamma_+^{n-1}}\to\R$ defined by $F_\ast(Z^{-1}):=F(Z)^{-1}$, where $F(Z):=f(z)$ and $z$ is the $n$-tuple of eigenvalues of $Z$. Differentiating this identity in the direction of $B\in \mathrm{Sym}_{\R^{n-1}}$, we find
\bann
-D_{X}F_\ast|_{Z^{-1}}={}&
-\frac{1}{F^2(Z)}D_BF|_{Z}\,,
\eann
where $X:=Z^{-1}BZ^{-1}$. Differentiating once more yields
\bann
D_{X}D_{X}F_\ast|_{Z^{-1}}+2D_{XZX}F_\ast|_{Z^{-1}}
={}&\frac{2}{F^3(Z)}(D_BF|_{Z})^2-\frac{1}{F^2(Z)}D_BD_BF|_{Z}
\eann
and we conclude
\bann
0\leq{}&-D_XD_XF_\ast|_{Z^{-1}}\\
={}&\frac{1}{F^2(Z)}\left.\lb D^2F-\frac{2DF\otimes DF}{F}+2DF\ast Z^{-1}\rb\right|_{Z}(B,B)\,,
\eann
where $\ast$ denotes the product $(R\ast S)^{pq,rs}:=R^{pr}S^{qs}$. This implies \eqref{eq:ICcond}. 

We now return to the evolution equation for $\kappa_1$. Note that $N$ is Lipschitz with respect to $A$. Thus, denoting by $\overline A$ the projection of $A$ onto $\pd\mathrm{Sym}_{\Gamma^n_+}$, we obtain
\bann
(\pd_t-\eL)\kappa_1+B^k\cd_k\kappa_1\geq{}&-\vert N(A,\cd A)-N(\overline A,\cd A)\vert\\
\geq{}&-C\Vert A-\overline A\Vert\\
={}&-C\kappa_1\,,
\eann
where $C$ is the worst Lipschitz constant of $N(\cdot,\cd A)$ on the set $U$. Note that $C$ is bounded on any compact subset of $U$. The strong maximum principle now implies that $\kappa_1\equiv 0$ on $K$ for any compact subset $K$ of $U$. It follows that $U\subset\{(x,t)\in M^n\times(0,t_0]:\kappa_1(x,t)=0\}\subset U$ and we deduce that $U$ is closed, and hence equal to $M^n\times (0,t_0]$. But in that case, we must have, by \eqref{eq:ICcond},
\bann
0\equiv \frac{\pd F}{\pd A_{pq}}\cd_1A_{pq}\,.
\eann
By monotonicity of $F$, we conclude that $\cd_1A\equiv 0$. 

Using standard arguments, we can now deduce the splitting: Observe that, for any $v\in \Gamma(\ker(A))$,
\bann
0\equiv \cd_k(A(v))=\cd_kA(v)+A(\cd_kv)=A(\cd_kv)\,.
\eann
Thus, $\cd_kv\in \Gamma(\ker(A))$ whenever $v\in \Gamma(\ker(A))$; that is, $\ker(A)\subset TM^n$ is invariant under parallel translation in space. Since, for any $v\in \Gamma(\ker A)$ and any $u\in TM^n$, we have
\bann
\XD_uX_\ast v= X_\ast \cd_uv-A(u,v)\nu=X_\ast \cd_uv\in X_\ast\ker A\,,
\eann
where $\XD$ is the pull-back of the Euclidean connection along $X$, we deduce that $X_\ast\ker A\subset T\R^{n+1}$ is parallel (in space) with respect to $\XD$.

Moreover, using the evolution equation \eqref{eq:FevolveA} for $A$, we obtain
\bann
\cd_tA(v)={}&\eL A(v)\\
={}&\frac{\pd F}{\pd A_{kl}}\lsb\cd_k\lb\cd_lA(v)\rb-\cd_l(A(\cd_kv))-A(\cd_l\cd_kv)\rsb\\
={}&0\,,
\eann
so that
\bann
A(\cd_tv)=\cd_t(A(v))-\cd_tA(v)=0\,;
\eann
that is, $\ker A$ is also invariant with respect to $\cd_t$. Since, for any $v\in \Gamma(\ker(A))$, we have $\cd_vF=\frac{\pd F}{\pd A_{kl}}\cd_vA_{kl}\equiv 0$, this implies that
\bann
\XD_tX_\ast v=(\cd_vF)\nu+X_\ast \cd_tv=X_\ast \cd_tv\,,
\eann
and we deduce that $X_\ast\ker A$ is also parallel in time. We conclude that the orthogonal compliment of $X_\ast\ker(A)$ is a constant (in space and time) subspace of $\R^{n+1}$.

Now consider any geodesic $\gamma:\R\to M^n\times\{t\}$, $t\in (0,t_0]$, with $\gamma'(0)\in \ker(A)$. Then, since $\ker(A)$ is invariant under parallel translation, $\gamma'(s)\in\ker(A)$ for all $s$, so that
\bann
\XD_sX_\ast\gamma'= X_\ast\cd_s\gamma'-A(\gamma',\gamma')\nu=0\,.
\eann

Thus, $X\circ\gamma$ is geodesic in $\mathbb{R}^{n+1}$. We can now conclude that $X$ splits off a line, $M^n\cong \mathbb{R}\times\Sigma^{n-1}$, such that $\mathbb{R}$ is flat ($T\R$ is spanned by $\ker(A)$) and $\Sigma^{n-1}$ is strictly convex ($T\Sigma^{n-1}$ is spanned by the rank space of $A$) and maps into the constant subspace $\lb X_\ast\ker(A)\rb^\perp\cong \mathbb{R}^{n}$.

It follows that $X\big|_{\{0\}\times\Sigma^{n-1}\times (0,t_0]}$ satisfies
\ba\label{eq:tildeF}
\pd_t\widetilde X(\widetilde x,t)=-\widetilde F(\widetilde x,t)\widetilde\nu(\widetilde x,t)\,,
\ea
for all $(\widetilde x,t)\in \{0\}\times \Sigma^{n-1}\times(0,t_0]$, where $\widetilde \nu=\nu\big|_{\{0\}\times\Sigma\times(0,T]}$ and $\widetilde F$ is given by the restriction of $f$ to $\Gamma_+^{n-1}\cong\{z\in \overline\Gamma_+:z_1=0, z_{2}>0,\dots,z_n>0\}$.
\end{proof}

\begin{rmks}\label{rem:splittingF}\mbox{}
\begin{enumerate}
\item By condition (ii), the cross-section $\Sigma^{n-1}$ in the splitting must be compact \cite{Hm94}, and we conclude, by uniqueness of solutions of \eqref{eq:tildeF}, that the isometric splitting persists until the maximum time.
\item Flows by convex admissible speeds defined on the faces of $\Gamma^n_+$ automatically satisfy condition (i).
\item If $n=2$, flows by admissible speeds defined on the faces of $\Gamma{}^2_+$ automatically satisfy condition (i).
\item \label{rem:uniform} Condition (ii) can be arranged if the flow preserves any form of \emph{uniform} two-convexity. This is the case for flows by convex speeds, which preserve $\kappa_1+\kappa_2\geq\alpha F$, flows of surfaces (trivially) and flows by concave speeds satisfying $f\big|_{\pd\Gamma^n}\equiv 0$, $\Gamma^n\subset\Gamma_2^n$, which preserve $\kappa_n\leq CF$ or $H\leq CF$.
\end{enumerate}
\end{rmks}

\bibliographystyle{plain}
\bibliography{bibliography}

\end{document}